\newtheorem{theorem}{Theorem}[section]
\newtheorem{proposition}[theorem]{Proposition}
\newtheorem{corollary}[theorem]{Corollary}
\newtheorem{lemma}[theorem]{Lemma}
\theoremstyle{definition}
\newtheorem{problem}[theorem]{Problem}
\numberwithin{equation}{section}
\newcommand{\Z}{{\mathbb Z}}
\newcommand{\N}{{\mathbb N}}
\title{On Diophantine graphs}
\author{
Gerg\H{o} Batta\thanks{Institute of Mathematics, University of Debrecen, batta.gergo@science.unideb.hu}, 
Lajos Hajdu\thanks{Institute of Mathematics, University of Debrecen and HUN-REN DE Equations, Functions, Curves and their Applications Research Group, hajdul@science.unideb.hu}, 
Andr\'as Pongr\'acz\thanks{HUN-REN Alfr\'ed R\'enyi Institute of Mathematics, ORCID 0000-0002-2771-8974, pongracz.andras@renyi.hu, andras.pong@gmail.com}
}
\date{June 9, 2024}
\begin{document}

\maketitle

\abstract{Diophantine tuples are of ancient and modern interest, with a huge literature. 
In this paper we study Diophantine graphs, i.e.,  finite graphs whose vertices are distinct positive integers, and two vertices are linked by an edge if and only if their product increased by one is a square. 
We provide various results for Diophantine graphs, including extendability properties, lower- and upper bounds for the maximum number of edges and chromatic numbers.}

\noindent
\emph{Keywords:} Diophantine tuples, shifted products, squares, Pell-equation, graph, chromatic number. \\
\emph{MSC codes:11D09, 05C42, 11D25, 11D75, 05C15} 

\section{Introduction}

A set $\{a_1,\dots,a_n\}$ of distinct positive integers is called a Diophantine $n$-tuple if $a_ia_j+1$ is a square for all $1\leq i<j\leq n$. 
Diophantine tuples are of ancient and modern interest, with a huge literature. 
We only mention a few results. 
As the first 'modern' result, Baker and Davenport \cite{BaDe} showed that $\{1,3,8\}$ can be extended to a Diophantine quadruple only by adjoining $120$. 
In particular, $\{1,3,8\}$ cannot be extended to a Diophantine quintuple. 
Later, the latter result was extended by Dujella and Peth\H{o} \cite{DP}, who showed that already the pair $\{1,3\}$ cannot be extended to a Diophantine quintuple. 
Dujella \cite{Duj04} proved that there are no Diophantine sextuples and there are only finitely many Diophantine quintuples, while He, Togb\'e and Ziegler \cite{HTZ19} showed that there are no Diophantine quintuples. 
For other related research directions and results, we refer to the above-mentioned papers and the references given there, the homepage \cite{Dujhomepage} maintained by Dujella, and an excellent, brand new book \cite{Dujbook} of Dujella.

In the present paper we study Diophantine graphs. 
Given a finite set $V$ of positive integers, the induced Diophantine graph $D(V)$ has vertex set $V$, and two numbers in $V$ are linked by an edge if and only if their product increased by one is a square. 
A finite graph $G$ is a Diophantine graph if it is isomorphic to $D(V)$ for some finite set $V\subseteq \mathbb{N}$; such a set $V$ is called a witness of $G$, and $D(V)$ is a representation of $G$ as a Diophantine graph. 
Already one can find several related results in the literature. 
To start with, observe that the result of He, Togb\'e and Ziegler \cite{HTZ19} implies that $K_5$ is a ``forbidden'' subgraph, that is, a graph containing $K_5$ is not a Diophantine graph. 
(Here and later on $K_t$ $(t\geq 1)$ stands for the complete graph on $t$ vertices.) 
Further, Dujella \cite{Duj08} gave upper bounds for the number of the $K_t$ subgraphs of $D(V)$ for $t=2,3,4$ and $V=\{1,\dots,N\}$. 
Note that the motivation of \cite{Duj08} is to study Diophantine pairs, triples and quadruples among the first $N$ positive integers. 
Another result we mention is Theorem 2 of Bugeaud and Gyarmati \cite{BGy04}, who (using classical tools from extremal graph theory) among others proved that the number of edges in a Diophantine graph on $N$ vertices is bounded by $0.4N^2$. 
We mention that in \cite{BGy04} various further results are given, related to the cases where the shifted products $a_ia_j+1$ are $k$-th powers with $k\geq 3$, or are arbitrary (not necessarily equal) powers. 
Finally, we mention a recent paper of Yip \cite{yip}, concerning bipartite graphs in the case where the shifted products are $k$-th powers with $k\geq 3$. 
The cases $k=2$ and $k\geq 3$ are rather different in nature: for $k\geq 3$ the problem can be reduced to Thue equations (which are known to have finitely many solutions, and hence provide a strong tool to use right away), while for $k=2$ we have to deal with Pell-type equations (having infinitely many solutions).

We provide results in three directions. 
The first direction concerns the extendability of Diophantine graphs. 
We show that it is always possible to attach an isolated new vertex, or a new vertex linked to precisely one, arbitrary old vertex, or (under a simple, necessary condition) a new vertex linked to precisely two, arbitrary old vertices of a Diophantine graph. 
To prove our statements we shall need to combine various tools related to Pell-equations, simultaneous Pell-equations,  and elliptic equations. 
Then we give lower- and upper bounds for the maximum number of edges in a Diophantine graph on $N$ vertices. 
From an aforementioned result of Dujella \cite{Duj08} we know that this number is at least $\frac{6}{\pi^2}N\log N+\Theta(N)$. 
(Throughout the paper, $O, \Omega$, and $\Theta$ are used in the standard way as defined by Knuth: $f(n)=O(g(n))$ if $|f(n)|\leq C|g(n)|$ with some positive constant $C$ for all sufficiently large $n$; $f(n)=\Omega(g(n))$ if $g(n)=O(f(n))$, and finally, $f(n)=\Theta(g(n))$ if $f(n)=O(g(n))$ and $f(n)=\Omega(g(n))$.) 
We shall prove that this is not best possible, and improve the lower bound to $N(\log N)^{2\log 2-\varepsilon}$. 
To this end, we shall need advanced tools from prime number theory: namely, Hardy-Ramanujan type results of Sathe \cite{Sat53} and Selberg \cite{Sel54} on the asymptotic behavior of the function $\pi(x,k)$. 
As a simple combination of Tur\'an's famous theorem concerning cliques and the result of He, Togb\'e and Ziegler \cite{HTZ19} we can slightly improve upon the upper bound of Bugeaud and Gyarmati \cite{BGy04} for the number of edges. 
However, we strongly believe that Diophantine graphs are sparse. 
In this direction we can only prove a result under some assumptions, and we shall also need to assume the validity of a conjecture of Szpiro (see \cite{Sz}, and also Conjecture 0.4 in \cite{hs}) and N\'eron's conjecture \cite{ne} (and/or the related heuristics of Park, Poonen, Voight and Wood \cite{PPVW}) supporting that the rank of elliptic curves is absolutely bounded. 
Further, we have to use deep results of Hindry and Silverman \cite{hs} about the number of integral points on elliptic curves, and a result from extremal graph theory related to forbidden bipartite graphs also plays an important role here. 
Finally, based upon combinatorial and prime theoretical considerations, we are able to exhibit a Diophantine graph with chromatic number $5$. 
Note that as we know that $K_4$ is a Diophantine graph but $K_5$ is not, this is the first non-trivial step in this direction.

The structure of the paper is the following. 
In the next section we provide all our main results. 
Then we give their proofs (together with the necessary background) in separate sections. 
We conclude the paper with some remarks and open problems.

\section{Main results}

Now we provide our main results, arranged into distinct subsections.

\subsection{Extendability of Diophantine graphs}
By combining the theory of Pell- and simultaneous Pell equations with various other tools, we study the extensions of Diophantine graphs with a fixed representation. 

As we will see, it is always possible to find a new isolated vertex or a vertex which is linked to exactly one vertex of a given Diophantine graph $D(V)$. 
In fact, there are always infinitely many appropriate positive integers to solve these problems.  
However, to extend a Diophantine graph $D(V)$ by a vertex which is linked to exactly two given vertices in $V$ is more problematic. 
If the square-free part of two different numbers $v_1, v_2\in V$ coincide, than there are only finitely many common neighbors $w$ of $v_1$ and $v_2$ in $\mathbb{N}$. 
Indeed, the condition means that there are positive integers $a\neq b$ such that $v_1a^2=v_2b^2$. 
Thus if $wv_1+1=r^2$ and $wv_2+1=s^2$, then $(bs)^2-b^2=wv_2b^2=wv_1a^2=(ar)^2-a^2$, making $(ar)^2-(bs)^2=a^2-b^2$. 
As there are only finitely many pairs of square numbers with given nonzero difference $a^2-b^2$, there can only be finitely many solutions $w,r,s$ to the system of equations. 
(Sometimes there is no such solution, e.g., if $(v_1,v_2)=(1,4)$. For $(v_1,v_2)=(1,16)$, there is exactly one common neighbor $w=3$, arising from the unique solution $(w,r,s)=(3,2,7)$.)

\begin{theorem}
\label{thm1}
Let $V=\{v_1,\dots,v_n\}\subseteq \mathbb{N}$. 
Then each of the following conditions is satisfied by infinitely many positive integers $w$ whose square-free part differs from that of $v_k$ for all $1\leq k\leq n$: 
\begin{itemize}
\item[i)] $w$ is an isolated vertex of $D(V\cup\{w\})$,
\item[ii)] $w$ is linked in $D(V\cup\{w\})$ to exactly one arbitrarily prescribed vertex $v_i\in V$,
\item[iii)] $w$ is linked in $D(V\cup\{w\})$ to exactly two prescribed vertices $v_i,v_j\in V$ with different square-free part.
\end{itemize}
\end{theorem}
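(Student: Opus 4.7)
My strategy is uniform across the three parts. In each case I identify an infinite ``candidate'' set $\mathcal{C}\subseteq\N$ realising the prescribed squares, and then discard the ``bad'' $w\in\mathcal{C}$ (those that create an unwanted edge or satisfy $\mathrm{sqfree}(w)=\mathrm{sqfree}(v_l)$ for some $l$). The quantitative input I need is threefold: the solution set of a single Pell equation has $\Theta(\sqrt N)$ elements in $[1,N]$; that of a pair of simultaneous Pell equations has only $O(\log N)$ elements in $[1,N]$; and that of a nondegenerate triple of simultaneous Pell equations is finite by Siegel's theorem on integral points of elliptic curves.

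\textbf{Parts (i) and (ii).} For (i) I take $\mathcal C=\N$: for each $k$, writing $wv_k+1=x^2$ gives $x\leq\sqrt{Nv_k+1}$ with $x^2\equiv 1\pmod{v_k}$, yielding $O(\sqrt N)$ bad $w\leq N$; likewise $\{w\leq N:\mathrm{sqfree}(w)=\mathrm{sqfree}(v_l)\}=\{\mathrm{sqfree}(v_l)m^2\leq N\}$ has size $O(\sqrt N)$. Summing over $k,l$ the total bad count is $O(n\sqrt N)=o(N)$, so infinitely many $w$ qualify. For (ii) I set $\mathcal C_i:=\{w:wv_i+1\text{ is a square}\}$ and parametrise via $w=(x^2-1)/v_i$ with $x^2\equiv 1\pmod{v_i}$, which gives $|\mathcal C_i\cap[1,N]|=\Theta(\sqrt N)$. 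For each $k\neq i$ the system $wv_i+1=x^2,\,wv_k+1=y^2$ reduces to $v_kx^2-v_iy^2=v_k-v_i$; if $\mathrm{sqfree}(v_i)=\mathrm{sqfree}(v_k)$ this has only finitely many solutions (as noted before the theorem), and otherwise its solutions are generated by the fundamental unit of $\Z[\sqrt{v_iv_k}]$, contributing only $O(\log N)$ bad $w\leq N$. The condition $\mathrm{sqfree}(w)=\mathrm{sqfree}(v_l)$ leads to a Pell equation $x^2-\mathrm{sqfree}(v_l)v_im^2=1$ of the same logarithmic size. Subtracting $O(n\log N)$ from $\Theta(\sqrt N)$ leaves infinitely many good $w$.

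\textbf{Part (iii).} I take $\mathcal C_{i,j}:=\{w:wv_i+1\text{ and }wv_j+1\text{ are both squares}\}$. Since $\mathrm{sqfree}(v_i)\neq\mathrm{sqfree}(v_j)$, the product $v_iv_j$ is not a square and the generalised Pell equation $v_jx^2-v_iy^2=v_j-v_i$ admits the trivial solution $(1,1)$; iterating by the fundamental unit of $\Z[\sqrt{v_iv_j}]$ produces infinitely many $(x,y)$, hence infinitely many positive $w\in\mathcal C_{i,j}$, with $|\mathcal C_{i,j}\cap[1,N]|=\Theta(\log N)$. For each $k\neq i,j$, requiring $wv_k+1=z^2$ gives a third Pell equation; eliminating $w$ leaves the intersection of two quadric surfaces in $\mathbb A^3_{x,y,z}$, generically a smooth curve of genus one, on which Siegel's theorem leaves only finitely many integer points. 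The square-free-part condition $w=\mathrm{sqfree}(v_l)m^2$ yields the system $x^2-\mathrm{sqfree}(v_l)v_im^2=1,\,y^2-\mathrm{sqfree}(v_l)v_jm^2=1$, which collapses to $m=0$ when $\mathrm{sqfree}(v_l)\in\{\mathrm{sqfree}(v_i),\mathrm{sqfree}(v_j)\}$ and otherwise again defines a curve of genus one with finitely many integral points. Removing these finitely many bad $w$ from the infinite set $\mathcal C_{i,j}$ finishes the proof.

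\textbf{Main obstacle.} The crux is (iii): one must verify that the two-quadric intersections arising from the triple Pell system genuinely define curves of genus $\geq 1$ (no degeneration into a union of conics or lines) for the particular $v_i,v_j,v_k$ at hand, so that Siegel's theorem actually applies. This is where the ``elliptic equations'' mentioned in the introduction enter, and it is also the one place where the argument becomes noneffective.
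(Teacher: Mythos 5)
Your counting approach for parts~(i) and~(ii) is sound and genuinely more elementary than the paper's: the paper builds the good $w$ constructively via the Chinese Remainder Theorem (forcing a new prime $q$ into the square-free part and arranging $v_kw+1\equiv p_k\pmod{p_k^2}$ to block the unwanted squares), whereas you simply count bad $w\leq N$ and observe that $O(n\sqrt N)$ resp.\ $O(n\log N)$ is negligible compared to $\Theta(N)$ resp.\ $\Theta(\sqrt N)$ candidates. Both routes work, and the comparison is instructive.

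Part~(iii), however, has two real gaps. First, and more seriously, your claim that ``iterating by the fundamental unit of $\Z[\sqrt{v_iv_j}]$ produces infinitely many $(x,y)$, hence infinitely many positive $w\in\mathcal C_{i,j}$'' skips the integrality of $w$. A solution of $v_jx^2-v_iy^2=v_j-v_i$ yields $w=(x^2-1)/v_i=(y^2-1)/v_j$, which need not be an integer; rewriting as $X^2-v_iv_jY^2=v_j(v_j-v_i)$ with $X=v_jx$, the unit orbit of $(v_j,1)$ does \emph{not} automatically keep $v_j\mid X$ or $v_i\mid(Y^2-1)$. The paper resolves this by passing to $v_i'=v_i/\gcd(v_i,v_j)$, $v_j'=v_j/\gcd(v_i,v_j)$, showing $v_i'\mid X_t$ always, and then thinning the orbit to multiples of the order of the fundamental unit modulo $v_i$ to force $v_i\mid(Y_t-1)$; something equivalent is required in your argument. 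Second, the ``generically a smooth curve of genus one'' step you flag as the main obstacle is indeed a gap, but it is easily closed if you multiply the three relations $v_iw+1=r_i^2$, $v_jw+1=r_j^2$, $v_\ell w+1=r_\ell^2$ to get the single equation $(v_iw+1)(v_jw+1)(v_\ell w+1)=(r_ir_jr_\ell)^2$: since $v_i,v_j,v_\ell$ are distinct positive integers the cubic on the left has three simple roots, so Siegel/Baker applies unconditionally with no degeneration to verify. The same trick (multiplying two equations to get $v_iv_js^2z^4+(v_i+v_j)sz^2+1=\square$, discriminant $16v_iv_js^6(v_i-v_j)^4\neq 0$) handles the square-free-part exclusion you treat via a second pair of quadrics. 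With those two repairs your argument matches the paper's.
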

 
Note that a classical result of Thue~\cite{Thu09}, made effective by Baker~\cite{Baker68} implies the following. 

\begin{proposition}\label{prop:threeneigh}
Any three positive integers have only finitely many common neighbors in $\mathbb{N}$. 
\end{proposition}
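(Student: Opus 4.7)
The plan is to encode each common neighbor as an integer point on a single elliptic curve and then invoke the finiteness of such points, which ultimately rests on Thue's theorem in the effective form of Baker.

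Concretely, fix three distinct positive integers $a,b,c$, and suppose $w \in \mathbb{N}$ is a common neighbor, so that $aw+1 = x^2$, $bw+1 = y^2$ and $cw+1 = z^2$ for some positive integers $x,y,z$. Multiplying the three identities I obtain
\[
(xyz)^2 \;=\; (aw+1)(bw+1)(cw+1) \;=\; F(w),
\]
where $F(X) := (aX+1)(bX+1)(cX+1) \in \mathbb{Z}[X]$. Because $a,b,c$ are pairwise distinct, the roots $-1/a,-1/b,-1/c$ of $F$ are distinct; hence $F$ is separable and the affine curve $E \colon Y^2 = F(X)$ is a smooth model of an elliptic curve over $\mathbb{Q}$. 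Every common neighbor $w$ then produces a genuine integer point $(w,\, xyz) \in E(\mathbb{Z})$, and distinct values of $w$ give distinct points.

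I would next quote Siegel's theorem on integral points to conclude that $E(\mathbb{Z})$ is finite, whence there are only finitely many admissible $w$. This is precisely the step that matches the Thue--Baker reference in the statement: the classical proof factors $Y^2=(aX+1)(bX+1)(cX+1)$ over the splitting field of $F$ (already $\mathbb{Q}$ here) and reduces the integrality condition to finitely many Thue equations, each of which has finitely many, effectively computable, solutions by Thue's 1909 theorem in the effective form due to Baker 1968.

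The only place where care is needed is to verify the separability of $F$, and this is immediate from the distinctness of the three vertices. As a sanity check one can bypass elliptic curves altogether by pairwise eliminating $w$ from the three square conditions, obtaining the simultaneous Pell system $bx^2-ay^2=b-a$ and $cx^2-az^2=c-a$, and quoting the finiteness of its integer solutions, again a classical consequence of Thue--Baker; I therefore do not anticipate a genuine obstacle beyond citing the correct deep theorem.
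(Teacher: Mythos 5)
Your proposal is correct and follows essentially the same route as the paper: multiply the three conditions $aw+1=x^2$, $bw+1=y^2$, $cw+1=z^2$ to get $(aw+1)(bw+1)(cw+1)=(xyz)^2$, observe that the cubic has distinct roots because $a,b,c$ are distinct, and invoke Thue--Baker (equivalently Siegel) for the finiteness of integral points on the resulting elliptic curve. The paper sketches exactly this (both in the discussion following the proposition and in the proof of Lemma~\ref{lem3}), citing Baker's 1968 result on $y^2 = ax^3+bx^2+cx+d$; your alternative remark about eliminating $w$ to obtain a simultaneous Pell system is also valid but is not the route the paper takes.
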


For example, the triple $1,2,3$ does not have any common neighbor. 
This can be verified by using the procedure {\tt IntegralPoints} of {\tt{Magma}}~\cite{magma}
(based upon results of Gebel, Peth\H{o}, Zimmer \cite{GPZ94} and Stroeker,
Tzanakis \cite{ST94} obtained independently), applied for the elliptic curve
$X^3-351X+1890=Y^2$, derived from $(x+1)(2x+1)(3x+1)=y^2$ by the substitution
$(X,Y)=(54x+33,162y)$.
The general result, Proposition~\ref{prop:threeneigh} follows by using the same technique: there are always finitely many integer points on such an elliptic curve according to \cite{Thu09, Baker68}. 
Thus Theorem~\ref{thm1} cannot be generalized to three (or more) prescribed vertices. 
Moreover, according to a result of Baker and Davenport \cite{BaDe}, the only common neighbor of $1,3,8$ in $\mathbb{N}$ is 120. 
As 7 is also linked to 120, there is no way to extend $1,3,7,8$ by a new vertex that is linked to $1,3,8$ but not to $7$.

\subsection{Bounds for the number of edges in Diophantine graphs}

Our next theorems concern the edge density in Diophantine graphs, or putting it in another way, the number of edges $e(D(V))$ of Diophantine graphs $D(V)$ on $n$ vertices. 
In fact, we are interested in $\max\limits_{|V|=n} e(D(V))$ for $n\in\N$ or the order of magnitude of this function.

We start with giving lower bounds. 
First we note that Theorem \ref{thm1} easily implies the existence of graphs $D(V)$ with $|V|=n$ and $e(D(V))= \Omega(n)$. 
Further, Theorem 1 of Dujella \cite{Duj08} yields the existence of such graphs with
$$
e(D(V))= \frac{6}{\pi^2}n\log n+\Theta(n).
$$
In fact, Dujella proved that the above assertion holds for the graph $D(V_N)$ induced by $V_N:=\{1,\dots,N\}$.

By omitting elements from $V_N$ of ``small'' degree, relying on deep Hardy-Ramanujan type results of Sathe \cite{Sat53} and Selberg \cite{Sel54} concerning the function $\pi(x,k)$, we obtain the following statement.

\begin{theorem}
\label{thm:densegr}
For any $\varepsilon>0$ there exists an arbitrarily large $n$ and a Diophantine graph $D(V)$ with $n$ vertices such that
$$
e(D(V))>n(\log n)^{2\log 2-\varepsilon}.
$$
\end{theorem}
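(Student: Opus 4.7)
The plan is to start from the full Diophantine graph on $V_N := \{1, 2, \ldots, N\}$, for which Dujella's Theorem~1 in \cite{Duj08} yields $e(D(V_N)) = \frac{6}{\pi^2} N \log N + O(N)$, and then to prune the vertices with few prime factors. The governing observation is that the degree of $a$ in $D(V_N)$ equals the number of integers $c$ with $a < c^2 \leq aN+1$ and $c^2 \equiv 1 \pmod{a}$; the latter congruence has $\asymp 2^{\omega(a)}$ solutions modulo $a$ (by CRT, with bounded corrections at the prime~$2$ and at higher prime powers), giving $\deg_{D(V_N)}(a) = \Theta\bigl(2^{\omega(a)} \sqrt{N/a}\bigr)$. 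Hence the low-degree vertices are essentially those with $\omega(a)$ small, and filtering by $\omega$ thins $V_N$ in a controlled way.

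Fix $\varepsilon > 0$, pick a small $\delta = \delta(\varepsilon) > 0$ to be chosen, set $\alpha := 2 - \delta$ and $k := \lfloor \alpha \log\log N \rfloor$, and let $V := \{a \in V_N : \omega(a) \geq k\}$. For the vertex count I would invoke the Sathe-Selberg formula \cite{Sat53, Sel54} for $\pi(N, j)$ uniformly in the range $j \asymp \log\log N$; combined with Stirling's formula this gives
\begin{equation*}
|V| = O\bigl( N (\log N)^{\alpha(1 - \log\alpha) - 1}\bigr),
\end{equation*}
and the exponent equals $1 - 2\log 2$ at $\alpha = 2$. On the edge side, deleting a vertex destroys at most its degree worth of edges, so
\begin{equation*}
e(D(V)) \;\geq\; e(D(V_N)) - \sum_{\omega(a) < k} \deg_{D(V_N)}(a).
\end{equation*}
The removed-degree sum will be estimated via the Selberg-Delange asymptotic $\sum_{a \leq N} z^{\omega(a)} = C(z) N (\log N)^{z-1}(1 + o(1))$ for $z \in (0, 2]$, partial summation with the weight $1/\sqrt{a}$, and the Chernoff-type observation that $1 \leq y^{\omega(a) - k + 1}$ whenever $\omega(a) < k$ and $y \in (0, 1)$. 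Optimizing the resulting bound at $y = \alpha/2$ yields
\begin{equation*}
\sum_{\omega(a) < k} 2^{\omega(a)} \sqrt{N/a} = O\bigl( N (\log N)^{\alpha\log(2e/\alpha) - 1}\bigr),
\end{equation*}
whose exponent equals $1$ at $\alpha = 2$ and $1 - \Theta(\delta^2)$ at $\alpha = 2 - \delta$, hence is $o(N \log N)$. It follows that $e(D(V)) = \bigl(\tfrac{6}{\pi^2} + o(1)\bigr) N \log N$.

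Putting these two estimates together, $e(D(V))/|V| \gg (\log N)^{2 - \alpha(1 - \log\alpha)}$, and the exponent tends to $2 \log 2$ as $\alpha \to 2^{-}$. Since $\log|V| = \log N - O(\log\log N) \sim \log N$, choosing $\delta$ small enough in terms of $\varepsilon$ gives $e(D(V)) > |V|\, (\log |V|)^{2\log 2 - \varepsilon}$ for all sufficiently large $N$; because $n := |V| \to \infty$ as $N \to \infty$, arbitrarily large values of $n$ arise. The principal technical obstacle is to secure enough uniformity in the Sathe-Selberg/Selberg-Delange asymptotics in the regime $z \to 2^{-}$ and $j \asymp \log\log N$, and to track the Stirling approximation precisely enough that the critical exponent $2 - \alpha(1 - \log \alpha)$ truly converges to $2\log 2$ as $\delta \to 0$; the rest of the argument is a bookkeeping exercise.
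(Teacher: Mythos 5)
Your proposal follows the same high-level strategy as the paper: start from $V_N$, use Dujella's $e(D(V_N))\sim\frac{6}{\pi^2}N\log N$, bound $\deg_{D(V_N)}(a)$ by $O(2^{\omega(a)}\sqrt{N/a})$ via the structure of solutions to $x^2\equiv 1\pmod a$, and then prune vertices with $\omega(a)$ below a threshold near $2\log\log N$. Your optimization at $\alpha=2-\delta$ and the resulting critical exponent $2\log 2$ match the paper's. The differences are in the estimate of the edges lost. The paper partitions $V_N$ into $\Theta_\varepsilon(1)$ intervals of the form $[N(\log N)^{-t\delta},N(\log N)^{-(t-1)\delta}[$, keeps a small block $[1,N(\log N)^{-2}[$ intact, and then peels off vertices in $\Theta_\varepsilon(1)$ layers by bands of $\omega(a)/\log\log N$; the loss in each interval-and-band combination is controlled by a direct application of the Sathe--Selberg tail estimate together with the degree bound. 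Your proposal instead applies Rankin's trick $\mathbf{1}[\omega(a)<k]\leq y^{\omega(a)-k+1}$ to linearize the constraint, invokes the Selberg--Delange asymptotic $\sum_{a\leq x} z^{\omega(a)}\sim C(z)x(\log x)^{z-1}$ with $z=2y$, and finishes with partial summation against the weight $a^{-1/2}$ and an optimization in $y$. This is genuinely more streamlined: it collapses the paper's double decomposition (in dyadic-type intervals and in $\omega$-bands) into a single mean-value estimate, and your closed-form exponent $\alpha\log(2e/\alpha)-1$ with second-order behavior $1-\Theta(\delta^2)$ near $\alpha=2$ makes the criticality of the constant $2\log 2$ transparent. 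The trade-off is that you lean on the Selberg--Delange theorem (which supplies the uniformity in $z$ you need near $z=2$) rather than just the counting function $\pi(x,k)$, whereas the paper needs only the cruder tail inequality it isolates as Corollary~\ref{cor:SaSe}; you correctly flag the uniformity question as the place that requires care. Two small bookkeeping remarks: when comparing $\log n$ with $\log N$ at the end, the trivial bound $n\leq N$ already suffices and you do not actually need the lower bound $\log|V|\sim\log N$; and note that your pruning rule $\omega(a)\geq k$ discards all the very small integers too, which the paper retains in a special initial block --- your partial-summation estimate with the $1/\sqrt a$ weight absorbs their contribution automatically, so no harm results, but it is worth noticing that this is where your approach quietly handles a case the paper treats separately.
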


Now we turn to upper bounds for $\max\limits_{|V|=n} e(D(V))$  for $n\in\N$. 
Our first theorem in this direction is obtained by a simple combination of the main result of He, Togb\'e and Ziegler \cite{HTZ19} and a seminal theorem of Tur\'an \cite{Tu41} concerning forbidden complete subgraphs. 
However, we formulate it as a theorem because of its strategic importance. 
Note that the statement is an improvement of Theorem 2 of Bugeaud and Gyarmati \cite{BGy04} with an upper bound $0.4n^2$, obtained by similar tools.

\begin{theorem}
\label{thm3}
For any Diophantine graph $D(V)$ with $|V|=n$ we have
$$
e(D(V))\leq \frac{3}{8}n^2.
$$
\end{theorem}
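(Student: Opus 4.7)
The plan is to combine the two ingredients the authors flag explicitly. First, I would observe that the class of Diophantine graphs is closed under taking induced subgraphs: if $D(V)$ contains $K_t$ as a subgraph on some vertex set $U\subseteq V$, then the $t$ integers in $U$ form a Diophantine $t$-tuple, so $D(U)\cong K_t$ and $K_t$ is a Diophantine graph. In particular, since He, Togb\'e and Ziegler \cite{HTZ19} ruled out the existence of Diophantine quintuples, $K_5$ is \emph{not} a Diophantine graph, and therefore no Diophantine graph $D(V)$ contains $K_5$ as a subgraph.

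Next, I would invoke Tur\'an's theorem \cite{Tu41}: a $K_{r+1}$-free graph on $n$ vertices has at most $\left(1-\frac{1}{r}\right)\frac{n^2}{2}$ edges. Applying this with $r=4$ to the $K_5$-free graph $D(V)$ yields
$$
e(D(V))\leq \left(1-\frac{1}{4}\right)\frac{n^2}{2}=\frac{3}{8}n^2,
$$
which is exactly the claimed bound.

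There is essentially no obstacle here beyond correctly invoking the two cited results; the only point one should be careful about is the elementary observation that forbidding $K_5$ as a Diophantine graph automatically forbids it as a subgraph of any Diophantine graph, which follows immediately from the fact that Diophantine-ness is inherited by subsets of the witness set $V$. The strategic value of the theorem, as noted by the authors, is that it packages this reduction in a clean form and improves the earlier constant $0.4$ of Bugeaud and Gyarmati \cite{BGy04} to the Tur\'an-optimal constant $3/8$ for $K_5$-free graphs.
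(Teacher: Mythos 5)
Your proof is correct and matches the paper's argument exactly: both deduce $K_5$-freeness from the He--Togb\'e--Ziegler theorem and then apply Tur\'an's theorem with $r=4$. The only addition you make is spelling out that the subgraph-closure of the Diophantine property justifies forbidding $K_5$ as a subgraph, which the paper leaves implicit.
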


We strongly suspect that a much better upper bound should be valid, and $\max\limits_{|V|=n} e(D(V))=o(n^2)$ should hold. 
In order to find a sub-linear upper estimate for the edge density of Diophantine graphs, it is inevitable to answer the following question: is $K_{t,t}$, the complete equipartite bipartite graph on $n=2t$ vertices, the subgraph of a Diophantine graph for all $t\in \mathbb{N}$? 
Equivalently, does there exist for all $t\in \mathbb{N}$ some positive integers $a_1, \ldots, a_t$ and $b_1, \ldots, b_t$ such that $a_ib_j+1$ is a perfect square for all $1\leq i,j \leq t$? 
If the answer is positive, then it yields an infinite sequence of Diophantine graphs with $n$ vertices and at least $n^2/4$ edges. 
If the answer is negative, with counterexample $t\in \mathbb{N}$, then according to a classical result in extremal graph theory~\cite{KST54}, there is an $O(n^{2-1/t})$ upper bound for the number of edges in Diophantine graphs on $n$ vertices. 
However, this problem seems to be out of reach with state-of-the-art methods: see Problem 1.4 of Dujella \cite{Dujproblems}, which asks whether already $K_{3,3}$ is a Diophantine graph or not. 
(Note that as a simple consequence of Theorem \ref{thm1}, we obtain that $K_{2,t}$ is a Diophantine graph for any $t$.) 
We present two related positive results. 

As we already mentioned, any three positive integers have only finitely many common neighbors in $\mathbb{N}$ due to \cite{Thu09, Baker68}; see Proposition~\ref{prop:threeneigh}.
That argument provides an upper bound for the number of common neighbors in terms of the three positive integers, but not a uniform upper bound. 
If such a uniform upper bound could be provided, that would settle the question concerning $K_{t,t}$. 
We show a stronger result in this direction than Proposition~\ref{prop:threeneigh}. 
This next theorem is strongly conditional, we need to assume two deep conjectures. 
The first one is due to Szpiro (see \cite{Sz}, and also Conjecture 0.4 in \cite{hs}). 
We only state it in the special case when the base field is the field of rational numbers: 
for all $\varepsilon>0$ there are only finitely many elliptic curves $E$ over $\mathbb Q$ satisfying
\begin{equation}
\label{szpiro}
\frac{\log |D_E|}{\log C_E}\geq 6+\varepsilon,
\end{equation}
where $D_E$ is the minimal discriminant and $C_E$ is the conductor of $E$. The second conjecture concerns the ranks of elliptic curves over $\mathbb Q$. For a long time, it has been widely believed that there is no absolute bound for them. However, recent heuristics of Park, Poonen, Voight and Wood \cite{PPVW} suggest that possibly the opposite can be true. Here we assume this option, which has already been predicted (among others) by N\'eron \cite{ne}:
\begin{equation}
\label{neron}
\text{the ranks of elliptic curves over}\ {\mathbb Q}\ \text{are uniformly bounded}.
\end{equation}
For an integer $d$, let $\omega(d)$ be the number of distinct prime factors of $d$.

\begin{theorem}
\label{thm4}
Let $a,b\in V$, and suppose that $D(V)\cong K_{t,t}$ with $t\geq 1$, such that $a,b$ belong to the same vertex class. Then, assuming Szpiro's conjecture \eqref{szpiro} and N\'eron's conjecture \eqref{neron}, there exists a constant $C=C(\omega(ab(a-b)))$ such that $t<C$.
\end{theorem}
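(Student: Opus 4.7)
The plan is to realize the $t$ vertices in the class opposite to $\{a,b\}$ as $x$-coordinates of integer points on an auxiliary elliptic curve, then apply the Hindry--Silverman conditional bound on integral points~\cite{hs} under Szpiro's conjecture, together with N\'eron's conjecture bounding Mordell--Weil ranks uniformly.

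The case $t\leq 2$ is trivial, so assume $t\geq 3$ and pick any third vertex $c\in V$ in the same class as $a,b$; write $\{b_1,\ldots,b_t\}$ for the opposite vertex class. Each $b_j$ is adjacent to $a$, $b$, and $c$ simultaneously, so $(ab_j+1)(bb_j+1)(cb_j+1)$ is a perfect square, exhibiting $b_j$ as the $x$-coordinate of an integer point on the elliptic curve
\[
E_{a,b,c}\colon y^2=(ax+1)(bx+1)(cx+1),
\]
which is nonsingular since $a,b,c$ are distinct. Therefore $t\leq \#E_{a,b,c}(\mathbb{Z})$.

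Next, under Szpiro's conjecture~\eqref{szpiro}, Hindry and Silverman (Theorem~0.6 of~\cite{hs}) bound the number of integer points on a minimal Weierstrass model of an elliptic curve over $\mathbb{Q}$ by $C_0^{1+r(E)}$ with $C_0$ absolute; N\'eron's conjecture~\eqref{neron} then gives a uniform bound $r(E)\leq r_0$. Converting $E_{a,b,c}$ into a minimal Weierstrass model (for instance via $X=abc\,x$, $Y=abc\,y$, yielding $Y^2=(X+ab)(X+ac)(X+bc)$) introduces additional primes controlled by $\omega(abc(a-b)(b-c)(c-a))$, producing a preliminary bound on $\#E_{a,b,c}(\mathbb{Z})$ expressed in these terms.

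The main obstacle is to eliminate the dependence on $c$ and obtain a bound depending only on $\omega(ab(a-b))$. Here one exploits that $c$ is itself constrained by the bipartite structure: being a common neighbor of (say) $b_1$ and $b_2$, $c$ lies on the Pell-type conic $b_2r^2-b_1s^2=b_2-b_1$, whose integer solutions form an arithmetic progression under the associated Pell group action. This strongly restricts the primes appearing in $c$, $c-a$, and $c-b$, so the $c$-dependent contribution to the discriminant of $E_{a,b,c}$ can be absorbed into a function of $\omega(ab(a-b))$ alone. Carrying out this final accounting — uniformly controlling prime contributions under the Pell-type constraint on $c$ — is where I expect the principal technical difficulty to lie.
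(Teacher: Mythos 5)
Your overall strategy matches the paper's: fix a third vertex $c$ in the class of $a,b$, observe that each $d$ in the opposite class produces an integral point on the elliptic curve $y^2=(ax+1)(bx+1)(cx+1)$, and then invoke the Hindry--Silverman bound under Szpiro's and N\'eron's conjectures. However, there is a genuine gap in your treatment of the $c$-dependence, and the route you suggest for closing it is not the one that works.

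The flaw is your reading of where $c$ enters the Hindry--Silverman bound. Their exponent is $|S|+1+(1+r_E)\frac{\log|D_E|}{\log C_E}$. The \emph{size} of the discriminant $D_E$, and its number of prime factors $\omega(D_E)$, simply do not appear: only the Szpiro ratio $\frac{\log|D_E|}{\log C_E}$ does, and that is bounded by $6+\varepsilon$ (for all but finitely many curves) by Szpiro's conjecture regardless of how many primes divide $D_E$ or how those primes relate to $c$. So you do \emph{not} need to ``absorb the $c$-dependent contribution to the discriminant,'' and the Pell-conic constraint on $c$ would not help you do so anyway: Pell solutions grow exponentially but their number of prime factors is not controlled, so being a Pell solution places no useful bound on $\omega\bigl(c(c-a)(c-b)\bigr)$.

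What actually needs to be shown is that the set $S$ of primes entering as denominators is independent of $c$. Writing the curve in short Weierstrass form $y^2=x^3+Ax+B$ with $A,B$ as in the paper (so $D_E=3^{12}a^2b^2c^2(a-b)^2(a-c)^2(b-c)^2$), let $\ell$ be the largest integer with $\ell^4\mid A$ and $\ell^6\mid B$; the minimal model is obtained by $(x,y)\mapsto(x/\ell^2,y/\ell^3)$, and the integer points of the original model become $S$-integral on the minimal model with $S$ the set of primes dividing $\ell$. The crucial algebraic computation --- which your proposal does not carry out and which the Pell heuristic would not supply --- is that any prime $p$ dividing $\ell$ must divide $3ab(a-b)$. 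Hence $|S|\leq\omega(3ab(a-b))\leq\omega(ab(a-b))+1$ uniformly in $c$ (and in $t$). With $|S|$, $r_E$ (by N\'eron), and $\frac{\log|D_E|}{\log C_E}$ (by Szpiro) all bounded in terms of $\omega(ab(a-b))$ alone, the Hindry--Silverman bound immediately gives $t<C(\omega(ab(a-b)))$. In short: your setup is right, but the step you flag as the ``principal technical difficulty'' is resolved by a direct coefficient computation on $A$ and $B$, not by constraining $c$.
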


\subsection{Chromatic number of Diophantine graphs}

Finally, we study the chromatic number of Diophantine graphs. 
As the clique number, i.e., the size of the largest clique, of Diophantine graphs is at most four (see \cite{HTZ19}), it is plausible to ask whether all Diophantine graphs are four-colorable. 
Our next result shows that it is not the case. As we shall see, already this step requires considerable effort.

\begin{theorem}
\label{thm5}
Let $V=\{${\rm 1, 3, 8, 120, 2, 4, 12, 20, 24, 6, 22, 92, 204, 420, 36, 78, 84, 140, 210, 360, 364, 560, 60, 14, 40, 136, 220, 312, 33, 9, 10, 52, 56, 728, 11, 48, 90, 168, 408, 840, 5, 7, 28, 30, 34, 35, 46, 70, 88, 132, 180, 240, 2184, 280, 16, 21, 32, 44, 156, 816, 380, 13, 39, 72, 80, 96, 462, 528, 1140, 2380, 23, 102, 105, 110, 152, 264, 456, 858, 2520, 1365}$\}$. 
Then the graph $D(V)$ has chromatic number five.
\end{theorem}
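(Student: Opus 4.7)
The plan has two clear halves: prove $\chi(D(V))\le 5$ by exhibiting a proper $5$-coloring, and prove $\chi(D(V))\ge 5$ by ruling out every proper $4$-coloring. The former requires first computing the edge set of $D(V)$, which amounts to running over the $\binom{81}{2}=3240$ pairs $\{v_i,v_j\}\subseteq V$ and testing whether $v_iv_j+1$ is a perfect square; one then partitions $V$ into five independent sets and verifies that no listed edge lies within a class. Both tasks are purely arithmetic and, once $D(V)$ is in hand, routine.

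For the lower bound my approach is to exploit the numerous copies of $K_4$ present in $D(V)$. The set $V$ was evidently assembled by taking a seed Diophantine quadruple (the classical $\{1,3,8,120\}$), then successively appending Diophantine quadruples sharing one or two vertices with what was already present, so that $V$ contains many interlocking $K_4$ subgraphs. In any proper $4$-coloring each such $K_4$ must use all four colors; hence, whenever vertices $u_1,u_2,u_3,u_4$ form a $K_4$ and a further vertex $w$ is simultaneously adjacent to $u_1,u_2,u_3$, the color of $w$ is forced to equal the color of $u_4$. The strategy is to fix, without loss of generality (using the $S_4$-symmetry of the four colors), the coloring on an initial $K_4$ such as $\{1,3,8,120\}$, and then propagate forced color assignments through the rest of $V$ along a chain of overlapping $K_4$s, until two adjacent vertices are forced to receive the same color, producing a contradiction.

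The main obstacle is the combinatorial complexity of this propagation. With $81$ vertices and dozens of overlapping $K_4$s, a naive case split written out verbatim is unwieldy. To keep the argument tractable I would first search for a small critical sub-vertex-set $V_0\subseteq V$ --- ideally as close as possible to a $5$-chromatic-critical subgraph --- and carry out the forcing argument only on $V_0$, treating the remaining vertices as decorative padding that is properly colored by the explicit $5$-coloring but plays no role in the lower bound. Identifying $V_0$ is the heart of the proof: I would build it greedily, starting from a vertex of high degree (for instance $1$, or $120$), and at each step adjoining a vertex that belongs to a $K_4$ meeting the current set in three vertices --- since every such step forces a new color rather than branching, the resulting case analysis collapses to a single deterministic propagation modulo the initial permutation of four colors.

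The step I expect to be the genuine sticking point is verifying that the forcing chain closes: that the deterministic color-propagation from $\{1,3,8,120\}$ actually reaches a monochromatic edge, rather than stalling (in which case one would have to branch on the remaining degrees of freedom). If the chain does stall, one must either enrich $V_0$ with additional $K_4$s from $V$ to pin down more colors, or argue by a structural obstruction such as an odd-cycle-like configuration of $K_4$-intersections. Given that the authors stress that \emph{``already this step requires considerable effort,''} I anticipate that a bespoke combinatorial lemma --- isolating a specific configuration of several $K_4$s in $V$ whose union is provably not $4$-colorable --- will be needed, and that the bulk of the written proof will consist of exhibiting this configuration and checking the corresponding short case analysis by hand or with computer assistance.
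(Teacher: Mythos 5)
Your proposal is essentially the route the paper takes: fix the colors of the Diophantine quadruple $\{1,3,8,120\}$ to break the $S_4$ symmetry, then propagate color constraints through the rest of $V$, branching only when propagation stalls, until every branch reaches a contradiction. A few caveats on the details. First, your hope that the propagation closes as a \emph{single deterministic} chain of $K_4$-forcings is too optimistic: the paper's implementation (a Python constraint-propagation/backtracking search, using a ``sweeping'' rule that is more general than $K_4$-forcing since it fires whenever any vertex's candidate set shrinks to a singleton) reports up to $200$ simultaneous open branches during the search, so genuine case splitting is unavoidable. You correctly flag this as the likely sticking point, and your fallback --- branch on the remaining degrees of freedom --- is exactly what the paper does. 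Second, your plan to build a small critical set $V_0$ bottom-up greedily is inverted relative to the paper: they started from a heuristically chosen $1000$-vertex set (integers maximizing $2^{S(a)}/\sqrt a$), verified non-$4$-colorability there, and then pruned top-down to the given $80$-vertex $V$, which they show is minimal (every proper induced subgraph is $4$-colorable); so your sought-after $V_0$ is in fact all of $V$. Third, the paper never exhibits an explicit $5$-coloring; the bound $\chi(D(V))\le 5$ follows implicitly from minimality (remove any vertex, $4$-color the rest, give the removed vertex a fifth color). Your explicit partition into five independent sets would of course also do, and is arguably cleaner to present. Overall: same method, same symmetry-breaking seed, same computer-assisted verification, with your write-up slightly underestimating the amount of branching required.
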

We note that this $80$-element set induces the smallest Diophantine graph we know of that cannot be colored by four colors. 
We do not claim that this is the smallest example, but it is minimal: if any of the vertices is omitted from $V$, the resulting graph is four-colorable.

\section{Proof of the results on extensions of Diophantine graphs}

In this section we prove Theorem \ref{thm1}. 
In fact, the statement is an immediate consequence of the following three lemmas.

\begin{lemma}
\label{lem1}
Let $v_1,\dots,v_n$ be different positive integers. Then there exist infinitely many positive integers $w$ such that $v_i w+1$ is not a square, and the square-free part of $w$ and $v_i$ are different for all $i=1,\dots,n$.
\end{lemma}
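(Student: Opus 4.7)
The plan is to prove Lemma \ref{lem1} via a simple counting (density) argument: both forbidden conditions on $w$ cut out only $O(\sqrt N)$ integers up to $N$, so the complement is not merely nonempty but asymptotically equal to $N$.

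First, I would bound the count of $w\le N$ for which $v_iw+1$ is a perfect square for some fixed $i$. If $v_iw+1=y^2$, then $y$ is a positive integer with $y^2\le v_iN+1$, so there are at most $\lfloor\sqrt{v_iN+1}\rfloor$ such values of $y$, and each determines $w=(y^2-1)/v_i$ uniquely. Summing over $i=1,\dots,n$, the total number of such ``square-producing'' $w\le N$ is $O_{v_1,\dots,v_n}(\sqrt N)$.

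Second, I would bound the count of $w\le N$ whose square-free part coincides with the square-free part $d_i$ of some $v_i$. Integers with square-free part equal to a fixed $d_i$ are precisely the numbers $d_ik^2$ with $k\in\N$, so up to $N$ there are at most $\lfloor\sqrt{N/d_i}\rfloor$ of them. Summing over $i=1,\dots,n$ again yields $O_{v_1,\dots,v_n}(\sqrt N)$ forbidden values.

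Combining the two bounds, the total number of ``bad'' $w\le N$ (those violating at least one of the two requirements of the lemma) is $O(\sqrt N)$. Since there are $N$ positive integers up to $N$, the number of ``good'' $w\le N$ is $N-O(\sqrt N)$, which tends to infinity with $N$. Hence infinitely many positive integers $w$ satisfy both requirements simultaneously, proving the lemma.

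There is essentially no obstacle here: the argument is just the trivial observation that ``$v_iw+1$ is a square'' and ``$w$ has a prescribed square-free part'' each define sets of density zero (indeed of counting function $O(\sqrt N)$), while the positive integers have counting function $N$. The only mild care needed is to keep the implied constants uniform in $n$, which is unproblematic because $n$ is fixed throughout the statement.
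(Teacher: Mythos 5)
Your proof is correct, but it takes a genuinely different route from the paper. The paper's argument is constructive: it sets up a congruence system via the Chinese Remainder Theorem, forcing $v_i x + 1 \equiv p_i \pmod{p_i^2}$ for distinct primes $p_i \nmid v_i$ (so $v_i x+1$ is exactly divisible by the first power of $p_i$, hence not a square) and $x \equiv q \pmod{q^2}$ for a prime $q \nmid v_1\cdots v_n$ (so the square-free part of $x$ is divisible by $q$, hence cannot equal that of any $v_i$). Your argument is instead a counting/density argument: each forbidden condition excludes only $O(\sqrt N)$ integers up to $N$, so almost every $w$ works. Both are valid. Your approach is more elementary and gives the stronger quantitative conclusion that the good $w$ have density one; the paper's CRT approach has the advantage of producing solutions in an explicit arithmetic progression and of stylistic continuity with Lemmas~\ref{lem2} and~\ref{lem3}, which also build solutions by congruence conditions combined with Pell-type analysis.
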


\begin{proof}
Consider the system of congruences
\begin{align*}
v_i x + 1 &\equiv p_i \qquad (\bmod \; p_i^2)\\
x &\equiv q \qquad (\bmod \; q^2) 
\end{align*}
in integers $x$, where $p_i$ $(i=1,\dots,n)$ are different primes with $p_i\nmid a_i$, and $q$ is a prime different from all $p_i$ with $q\nmid v_i$ $(i=1,\dots,n)$. 
By the Chinese Remainder Theorem this congruence system has infinitely many positive solutions. Choosing $w$ among these solutions, our claim follows.
\end{proof}

\begin{lemma}
\label{lem2}
Let $v_1,\dots, v_{n}$ be different positive integers, and let $i\in\{1,\dots,n\}$ be fixed. Then there exist infinitely many positive integers $w$ such that $v_i w+1$ is a square, $v_jw +1$ is not a square for any $j\in\{1,\dots,n\}$ with $j\neq i$, and the square-free part of $w$ is different from those of the $v_\ell$ $(\ell=1,\dots,n)$.
\end{lemma}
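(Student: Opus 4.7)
The plan is to parametrize $w$ so that $v_iw+1$ is automatically a square, and then combine elementary Pell theory with a single congruence condition to enforce the remaining requirements. Set
$$
w = v_ik^2 + 2k \qquad (k\in\mathbb{N}),
$$
so that $v_iw+1 = (v_ik+1)^2$ is a square for every $k$.

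For each $j\neq i$ the ``bad'' event $v_jw+1=m^2$ becomes $v_iv_jk^2+2v_jk+1=m^2$, which on multiplication by $v_iv_j$ and completing a square in the first coordinate rewrites as
$$
(v_iv_jk+v_j)^2 - v_iv_j\,m^2 = v_j(v_j - v_i).
$$
The right-hand side is nonzero since $v_i\neq v_j$. When $v_iv_j$ is a perfect square this is a finite factorization problem with only finitely many solutions; otherwise it is a generalized Pell equation, whose solutions split into finitely many orbits under the automorphism group of $X^2-v_iv_jY^2=1$, and within each orbit the $k$-coordinate grows exponentially. In either case, the number of $k\leq N$ making $v_jw+1$ a square is $O(\log N)$.

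To control the square-free part of $w$, fix an odd prime $p$ that divides none of $v_1,\dots,v_n$, and restrict $k$ to the arithmetic progression $k\equiv p\pmod{p^2}$. Writing $k=p(1+pt)$ gives $v_ik+2\equiv 2\pmod{p}$, hence $p\| k(v_ik+2)=w$; thus $p$ appears in the square-free part of $w$ but in that of no $v_\ell$, forcing the square-free parts to differ.

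This arithmetic progression contains $\sim N/p^2$ integers up to $N$, while the union over $j\neq i$ of ``bad'' $k$-values has size $O(n\log N)$. For $N$ large enough there remain infinitely many $k$ producing $w$ with every required property. The main difficulty is the Pell-orbit bound of the second paragraph; it is classical, but requires the separate treatment of the degenerate case when $v_iv_j$ is a perfect square, as noted above. An alternative route, parallel to the proof of Lemma~\ref{lem1}, would sidestep Pell theory entirely: for each $j\neq i$ one can exploit that the quadratic $v_iv_jT^2+2v_jT+1$ has nonzero discriminant $4v_j(v_j-v_i)$ to locate a prime $q_j$ and a residue $r_j\pmod{q_j}$ making this polynomial a non-residue, and then combine the congruences $k\equiv r_j\pmod{q_j}$ with $k\equiv p\pmod{p^2}$ via CRT.
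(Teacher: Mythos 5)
Your proof is correct and follows essentially the same strategy as the paper's: the parametrization $w = v_ik^2 + 2k$ is exactly what the paper obtains by writing $w = (x^2-1)/v_i$ with $x \equiv 1 \pmod{v_i}$, and the $O(\log N)$ count of bad $k$ via the generalized Pell equation $(v_iv_jk+v_j)^2 - v_iv_j m^2 = v_j(v_j-v_i)$ is equivalent (after multiplying by $v_j$) to the paper's equation $v_ir^2 - v_jx^2 = v_i - v_j$. Your direct congruence $k\equiv p\pmod{p^2}$, which forces $p\,\|\,w=k(v_ik+2)$, is a cleaner way to control the square-free part than the paper's detour through the fundamental solution of $z^2 - qv_iy^2 = 1$, but the overall structure of the argument is the same.
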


\begin{proof}
Let $X$ be a positive real number which is large enough. (The precise magnitude of $X$ will become clear later.) Further, let $q$ be the smallest prime such that $q\nmid v_1\cdots v_n$, and consider the Pell equation
\begin{equation}
\label{eq2}
z^2-qv_iy^2=1.
\end{equation}
Let $z_0,y_0$ be the fundamental solution of \eqref{eq2}. Then $y_0\neq 0$; let $y_0=q^ty_1$ with $t\geq 0$ and $q\nmid y_1$. Consider the system of congruences
\begin{align*}
\label{congsys}
x &\equiv 1 \qquad (\bmod \; v_i) \\
x &\equiv z_0 \qquad (\bmod \; q^{2t+2}) 
\end{align*}
in integers $x$. 
By the Chinese Remainder Theorem it has precisely one solution modulo $M:=q^{2t+2}v_i$. 
Observe that $M$ depends only on $V:=\{ v_1,\dots,v_n\}$. 
The number of solutions $x>1$ of the above system up to $X$ is at least $\lfloor X/M\rfloor-1$. 
Moreover, if we take such a solution $x$ and define $w:=(x^2-1)/v_i$, then we clearly have that $w$ is a positive integer with $v_i w+1$ being a square, and by \eqref{eq2} also that
$$
v_i w+1\equiv z_0^2\equiv qv_iy_0^2+1\pmod{q^{2t+2}}
$$
whence
$$
w\equiv q^{2t+1}y_1^2\pmod{q^{2t+2}}.
$$
The last congruence shows that the square-free part of $w$ is divisible by $q$, so it is different from all those of the $v_\ell$ $(\ell=1,\dots,n)$. So it is enough to show that for 'most' of such integers $w$, we have that $v_j w+1$ is not a square for $j\in\{1,\dots,n\}$, $j\neq i$. For this, assume to the contrary, that together with
\begin{equation}
\label{xeq}
v_i w+1=x^2
\end{equation}
we also have
$$
v_j w+1 = r^2
$$
for some $j\in\{1,\dots n\}$ with $j\neq i$. Combining these equalities, we easily get
\begin{equation}
\label{pelleq}
v_ir^2 - v_jx^2 = v_i-v_j
\end{equation}
which is a generalized Pell equation. 
Standard results imply that the number of integers $x$ with $1<x\leq X$ satisfying \eqref{pelleq} is at most $C'\log X$, where $C'$ is a constant depending only on $V$. 
(This follows easily e.g. from formula (2.18) on p. 145 of \cite{HaSa}, which concerns the same equation.) 
So there is a constant $C$ such that the number of positive integers $x$ up to $X$ satisfying \eqref{xeq} such that $v_j w+1$ is a square with $w=(x^2-1)/v_i$ and with some $j\in\{1,\dots,n\}$, $j\neq i$, is at most $C\log X$. 
From this, letting $X$ tend to infinity, our claim follows.
\end{proof}

\begin{lemma}
\label{lem3}
Let $v_1,\dots,v_n$ be different positive integers, and let $i,j$ be fixed distinct indices from $\{1,\dots,n\}$ such that the square-free part of $v_i$ and $v_j$ are different. 
Then there exist infinitely many positive integers $w$ such that $v_i w+1$ and $v_j w+1$ are squares, $v_\ell w +1$ is not a square for $\ell\in\{1,\dots,n\}$, $\ell\neq i,j$, and the square-free part of $w$ is different from all those of the $v_m$ $(m=1,\dots,n)$.
\end{lemma}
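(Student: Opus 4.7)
The plan is to build an infinite candidate pool of $w$ for which both $v_iw+1$ and $v_jw+1$ are squares, and then argue that only finitely many candidates in this pool violate either the ``no other square'' condition or the ``new square-free part'' condition.

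For the infinite pool: since the square-free parts of $v_i$ and $v_j$ differ, the product $v_iv_j$ is non-square, so the Pell equation $Z^2-v_iv_jT^2=1$ admits infinitely many positive solutions $(Z,T)$. Eliminating $w$ from $v_iw+1=x^2$ and $v_jw+1=y^2$ gives the generalized Pell equation $v_jx^2-v_iy^2=v_j-v_i$, which has the trivial solution $(x_0,y_0)=(1,1)$. Iterating
$$(x',y')=(Zx+v_iTy,\,v_jTx+Zy)$$
produces an infinite sequence of positive integer solutions. Each one yields a positive integer $w=(x^2-1)/v_i=(y^2-1)/v_j$ with both $v_iw+1$ and $v_jw+1$ squares. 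Call this infinite candidate set $\mathcal{W}$.

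The remaining task is to discard finitely many bad elements. Bad elements come in two types. Type (a): for some $\ell\in\{1,\dots,n\}\setminus\{i,j\}$, the value $v_\ell w+1$ is also a square. For each such $\ell$, the number $w$ is a common neighbor of the triple $v_i,v_j,v_\ell$, so by Proposition \ref{prop:threeneigh} there are only finitely many such $w$; summing over the finitely many $\ell$ again yields finitely many. (Concretely, this is the Thue--Baker finiteness for integer points on the elliptic curve $Y^2=(v_iw+1)(v_jw+1)(v_\ell w+1)$, whose cubic right-hand side has the three distinct roots $-1/v_i,-1/v_j,-1/v_\ell$.) Type (b): the square-free part of $w$ equals the square-free part $s$ of some $v_m$. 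For each such fixed $s$, writing $w=st^2$ forces $v_ist^2+1=x^2$ and $v_jst^2+1=y^2$; multiplying produces the curve
$$Y^2=(v_ist^2+1)(v_jst^2+1),$$
whose right-hand side is a quartic in $t$ with distinct roots, since $v_iv_js^2$ is non-square (the square-free parts of $v_i,v_j$ differ). By Siegel's theorem this genus-$1$ curve has only finitely many integer points $t$, hence finitely many $w=st^2$. Since the set of square-free parts of $v_1,\dots,v_n$ is finite, summing gives finitely many bad $w$ of type (b).

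Removing the two finite bad sets from the infinite pool $\mathcal{W}$ leaves infinitely many $w$ satisfying all required conditions. The main technical point I expect to need care with is the non-degeneracy of the cubic and quartic appearing in types (a) and (b): one has to verify that the pairwise distinctness of $v_i,v_j,v_\ell$ (resp.\ the assumption on square-free parts) really excludes repeated roots, so that Siegel/Thue--Baker finiteness genuinely applies rather than the curve degenerating to a rational one with infinitely many integer points.
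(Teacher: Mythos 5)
Your proposal follows the same three-stage plan as the paper's proof: first build an infinite pool of $w$ for which both $v_iw+1$ and $v_jw+1$ are squares via a generalized Pell equation, then remove the finitely many pool elements producing a third square (the cubic elliptic equation, finite by Thue--Baker, i.e.\ Proposition~\ref{prop:threeneigh}), and finally remove the finitely many pool elements whose square-free part coincides with that of some $v_m$ (the genus-one quartic, finite by Siegel or Baker). The one place you take a genuinely different route is the Pell construction itself. The paper first divides by $d=\gcd(v_i,v_j)$, works with the fundamental unit $\varepsilon$ of $\Z[\sqrt{v_i'v_j'}]$, and needs the order of $\varepsilon$ modulo $v_i$ to force $v_i'\mid X_t$ and $v_i\mid(Y_t-1)$; you instead iterate the automorphism $\bigl(\begin{smallmatrix} Z & v_iT \\ v_jT & Z \end{smallmatrix}\bigr)$ of the conic $v_jx^2-v_iy^2=v_j-v_i$ starting from the trivial solution $(1,1)$. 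Your variant is arguably cleaner because the divisibility $v_i\mid x_k^2-1$, needed for $w=(x_k^2-1)/v_i$ to be an integer, propagates automatically: since $Z^2\equiv 1\pmod{v_i}$, one has $x_{k+1}^2-1\equiv Z^2(x_k^2-1)\equiv x_k^2-1\pmod{v_i}$, and $x_0=1$ gives the base case. However, you never actually state or verify this integrality --- it is precisely the point the paper takes the most care over (the $t_0\mid t$ condition) --- and you must also discard $k=0$ to get $w>0$; both should be made explicit. One further small inaccuracy: in your Type~(b) argument, the four roots of $v_iv_js^2t^4+(v_i+v_j)st^2+1$ are distinct because $v_i\neq v_j$ (the discriminant is $16v_iv_js^6(v_i-v_j)^4$), not because $v_iv_js^2$ is a non-square; the non-square hypothesis on $v_iv_j$ is what you actually use in the first stage, to make the Pell equation indefinite. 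With these repairs the proof is complete and correct.
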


\begin{proof}
First we show that the system of equations 
\begin{align*}
v_i x+1 &= r_i^2 \\
v_j x+1 &= r_j^2
\end{align*}
has infinitely many solutions in positive integers $x,r_i,r_j$. From these equations we easily obtain that
$$
v_i'r_j^2 - v_j'r_i^2 = v_i'-v_j'
$$
where $v_i=dv_i'$ and $v_j=dv_j'$ with $d=\gcd(v_i,v_j)$. Hence we get
\begin{equation}
\label{Pell eq}
X^2 - v_i'v_j' Y^2 = v_i'(v_i'-v_j')
\end{equation}
where $X=v_i'r_j$ and $Y=r_i$. Since $v_i'v_j'$ is not a square, this is a generalized Pell equation. Observe that a solution to it is given by $(X,Y)=(v_i',1)$.
Let $\varepsilon=\mu+\nu\sqrt{v_i'v_j'}$ $(\mu,\nu\in\Z)$ be the fundamental unit of $\mathcal{O}:=\Z[\sqrt{v_i'v_j'}]$ of norm $1$. 
We see that \eqref{Pell eq} has infinitely many solutions of the form $(X,Y)=(X_t,Y_t)$, where $X_t,Y_t$ are defined by
$$
X_t+Y_t\sqrt{v_i'v_j'}=\left(v_i'+\sqrt{v_i'v_j'}\right)\left(\mu+\nu\sqrt{v_i'v_j'}\right)^t
$$
with $t\geq 0$. 
From this, we easily get that $v_i'\mid X_t$. 
Let $t_0$ be the order of $\varepsilon$ in $\mathcal{O}$ modulo $v_i$. Then for any $t$ with $t_0\mid t$ we have
$$
X_t+Y_t\sqrt{v_i'v_j'}\equiv v_i'+\sqrt{v_i'v_j'}\pmod{v_i},
$$
implying $v_i\mid (Y_t-1)$ in this case. So if we take such a $t$, then writing
$$
x:=\frac{Y_t^2-1}{v_i}=\frac{(X_t/v_i')^2-1}{v_j},
$$
we see that $x$ satisfies the original system of equations. 
That is, we obtained infinitely many solutions for that system, indeed. 
Now we show that the square-free parts of these solutions can come from an infinite set. 
Assume to the contrary that all such solutions can be written as $x=sz^2$, where $s$ comes from a finite set. 
Then multiplying the original equations, we obtain
$$
v_iv_js^2z^4+(v_i+v_j)sz^2+1=(r_ir_j)^2.
$$
For a fixed $s$, this is a hyperelliptic equation (in $z$ and $r_ir_j$ as integral variables). 
According to Baker's classical result \cite{Baker69}, it has only finitely many solutions, provided that the polynomial on the left hand side has simple roots in $z$. 
Since the discriminant of that polynomial is $16v_iv_js^6(v_i-v_j)^4$, and $v_i\neq v_j$, our claim follows. 
So we have infinitely many positive integers $w$ such that $v_i w+1$ and $v_j w+1$ are both squares, and the square-free parts of these $w$ are all different. 
Now we only need to show that for infinitely many such $w$, $v_\ell w+1$ is not a square for $\ell\in\{1,\dots,n\}$, $\ell\neq i,j$. 
To this end, consider the system of equations 
\begin{align*}
v_ix+1 &= r_i^2 \\
v_jx+1 &= r_j^2 \\
v_\ell x+1 &= r_\ell^2
\end{align*} 
in positive integers $x,r_i,r_j,r_\ell$, where $\ell\in\{1,\dots,n\}$, $\ell\neq i,j$. This yields 
$$
(v_ix+1)(v_jx+1)(v_\ell x+1)=(r_ir_jr_\ell)^2,
$$
which is an elliptic equation (in $x$ and $r_ir_jr_\ell$ as integral unknowns). 
By a classical results of Baker \cite{Baker68} (using that $v_i,v_j,v_\ell$ are distinct positive integers) this equation has only finitely many solutions. 
\end{proof} 

\begin{corollary}
Every finite graph has a Diophantine subdivision. 
In particular, every finite graph is a minor of a Diophantine graph. 
\end{corollary}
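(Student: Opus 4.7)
The plan is to construct, for a given finite graph $G$, a Diophantine graph $D(V)$ that is a subdivision of $G$; contracting the internal vertices of the subdivision paths then exhibits $G$ as a topological minor of $D(V)$, which in particular makes $G$ a minor. The construction is a straightforward iterative application of the three parts of Theorem~\ref{thm1}.

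First, I realize the vertex set of $G = (\{v_1, \dots, v_n\}, E)$ as an independent set in a Diophantine graph. Starting from any positive integer $w_1$, apply part (i) of Theorem~\ref{thm1} repeatedly: at step $k$ it produces $w_k$ that is isolated in $D(\{w_1, \dots, w_k\})$ and whose square-free part differs from those of $w_1, \dots, w_{k-1}$. After $n$ steps, the integers $w_1, \dots, w_n$ are pairwise non-adjacent in $D(\{w_1, \dots, w_n\})$ and have pairwise distinct square-free parts.

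Next, fix an enumeration $e_1, \dots, e_m$ of the edges of $G$ and process them one at a time. For the $k$-th edge $e_k = \{v_{i_k}, v_{j_k}\}$, apply part (iii) of Theorem~\ref{thm1} to the current vertex set (the $w_l$'s together with the previously inserted vertices $u_1, \dots, u_{k-1}$), with prescribed vertices $w_{i_k}$ and $w_{j_k}$. This is legitimate because $w_{i_k}$ and $w_{j_k}$ have different square-free parts, a property arranged in the first phase and preserved thereafter. The output is a positive integer $u_k$ adjacent, in the enlarged Diophantine graph, to \emph{exactly} $w_{i_k}$ and $w_{j_k}$ among existing vertices, and whose square-free part is different from those of all previous vertices; in particular, $u_k$ is not linked to any earlier $u_l$, so no unintended edges ever appear.

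After all $m$ edges are processed, the set $V := \{w_1, \dots, w_n\} \cup \{u_1, \dots, u_m\}$ induces a Diophantine graph $D(V)$ whose edges are precisely the pairs $w_{i_k} u_k$ and $u_k w_{j_k}$ for $k = 1, \dots, m$; this is exactly the subdivision of $G$ in which each edge is replaced by a path of length two. Hence $D(V)$ is a Diophantine subdivision of $G$, and contracting the two edges of each such path recovers $G$ as a minor. No substantial obstacle arises: the only non-trivial requirement is the applicability of Theorem~\ref{thm1}(iii) at every step, which is secured once and for all by the distinct-square-free-part preparation in the first phase.
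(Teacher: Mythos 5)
Your proof is correct and follows essentially the same approach as the paper's: first use Theorem~\ref{thm1}(i) iteratively to realize the vertices of $G$ as an independent set of numbers with pairwise distinct square-free parts, then use Theorem~\ref{thm1}(iii) once per edge to insert a midpoint vertex linked to exactly the two endpoints, producing the 2-subdivision of $G$. The one point that needs care---that the distinct-square-free-part condition is maintained throughout so that (iii) always applies---is handled correctly in both arguments.
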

\begin{proof}
Let $G$ be a finite graph with $n$ vertices. 
According to Lemma~\ref{lem1} we can choose a set $V$ of $n$ positive integers iteratively, each with a different square-free part, such that $D(V)$ is an empty graph. 
Let us identify the vertex set of $G$ with $V$. 
For each edge of $G$, we can extend $V$ by a new number that is linked to exactly the two endpoints of the edge. 
This can also be done iteratively: in each step, we choose a new number that is not linked to any number in the extended set other than the two endpoints of the given edge. 
\end{proof}

\begin{corollary}\label{cor:3reg}
The following are equivalent. 
\begin{enumerate}
\item Every finite graph with maximum degree at most 3 can be represented as a Diophantine graph such that the numbers associated to vertices of degree at most 2 have different square-free part. 
\item Every finite graph with maximum degree at most 3 is Diophantine. 
\item Every finite 3-regular graph is Diophantine. 
\end{enumerate}
\end{corollary}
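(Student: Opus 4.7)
The implications (1) $\Rightarrow$ (2) and (2) $\Rightarrow$ (3) are immediate: the first follows by simply dropping the square-free-part requirement on the representation, and the second holds because every $3$-regular graph has maximum degree at most $3$. The content of the statement therefore lies in (3) $\Rightarrow$ (1), which I plan to prove by strong induction on $|V(G)|$, using Lemmas~\ref{lem1}--\ref{lem3} as extension tools and hypothesis (3) as the ``base'' of the induction; the case $|V(G)|=0$ is vacuous.

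For the inductive step, let $G$ have maximum degree at most $3$. If every vertex of $G$ has degree exactly $3$, then $G$ is $3$-regular and hypothesis (3) furnishes a Diophantine representation directly, while the square-free-part condition becomes vacuous. Otherwise I pick any vertex $v$ of degree at most $2$, form $G':=G-v$, which still has maximum degree at most $3$, and apply the inductive hypothesis: there is a Diophantine representation $\phi'$ of $G'$ in which the integers assigned to the vertices of degree at most $2$ in $G'$ have pairwise distinct square-free parts. I would then extend $\phi'$ to a representation $\phi$ of $G$ by setting $\phi(v):=w$, where $w$ is a positive integer supplied by Lemma~\ref{lem1}, Lemma~\ref{lem2}, or Lemma~\ref{lem3}, according as $\deg_G(v)$ equals $0$, $1$, or $2$. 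In all three cases the chosen lemma guarantees both the correct adjacency pattern of $w$ to the integers in $\phi'(V(G'))$ and the fact that $w$ has a square-free part different from those of every integer already used by $\phi'$; together with the inductive hypothesis this yields the required square-free-part property on all vertices of degree at most $2$ in $G$, as well as the injectivity of $\phi$.

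The one point that might look delicate is the application of Lemma~\ref{lem3} in the case $\deg_G(v)=2$, since that lemma requires the two prescribed neighbors $a,b$ of $v$ to have different square-free parts under $\phi'$. This is precisely where the set-up pays off: since $a$ and $b$ have degree at most $3$ in $G$ and each of them lost the edge to $v$ in passing from $G$ to $G'$, both $a$ and $b$ have degree at most $2$ in $G'$. The inductive hypothesis applied to $G'$ then guarantees that $\phi'(a)$ and $\phi'(b)$ have distinct square-free parts, which is exactly the hypothesis Lemma~\ref{lem3} needs. Once this key observation is in hand, the remaining verifications---that $\phi$ represents $G$ as a Diophantine graph and that the square-free-part condition required in (1) is satisfied on the vertices of degree at most $2$ of $G$---are routine consequences of the conclusions of Lemmas~\ref{lem1}--\ref{lem3} together with the inductive hypothesis.
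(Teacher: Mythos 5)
Your proof is correct and follows essentially the same route as the paper: the trivial implications, induction on the number of vertices, removing a vertex $v$ of degree at most $2$ when the graph is not $3$-regular, and the key observation that the former neighbors of $v$ have degree at most $2$ in $G-v$, so the inductive square-free-part condition lets Lemma~\ref{lem3} be applied.
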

\begin{proof}
The implications $(1)\Rightarrow (2)\Rightarrow (3)$ are trivial. 

We prove $(3)\Rightarrow (1)$ by induction on the cardinality $n$ of the vertex set of the graph $G$; the assertion is trivial if $n=1$. 
Now assume that $n>1$, item $(3)$ holds, and item $(1)$ holds for all graphs with at most $n-1$ vertices. 
If $G$ is 3-regular, then it has a Diophantine representation by item $(3)$: the condition about the square-free part of vertices of degree at most 2 is empty in this case. 

Hence, we may assume that $G$ is not 3-regular, that is, there exists a vertex $w$ in $G$ with degree at most 2. 
Let $G'$ be the graph obtained by deleting the vertex $w$ (together with all edges incident to it) from $G$. 
Then every vertex in $G'$ that was a neighbor of $w$ in $G$ has degree at most 2 in $G'$. 
According to the induction hypothesis, $G'$ has a representation as a Diophantine graph such that all vertices of degree at most 2 have different square-free part. 
In particular, if $w$ has degree 2 in $G$, then the two neighbors of $w$ are represented by numbers with different square-free part in the representation of $G'$. 
Now, we can apply Lemma~\ref{lem1}, \ref{lem2}, or \ref{lem3} if $w$ has degree 0, 1, or 2 in $G$, respectively. 
This yields an extension of the representation of $G'$ to a representation of $G$ such that the square-free part of the number associated to $w$ is different from the square-free part of all other numbers in the representation. 
Hence, in this representation of $G$, the square-free part of vertices of degree at most 2 are all different. 
\end{proof}

By using a similar argument, we can represent many small graphs as Diophantine graphs. 
The natural goal would be to show that all graphs with at most five vertices are Diophantine, except for $K_5$, of course. 
However, there is a graph on five vertices that we could neither represent as a Diophantine graph, nor could we prove that it is not Diophantine. 
It is the pyramid graph that consists of a 4-cycle together with an extra vertex linked to all four vertices of the 4-cycle. 
It can be obtained from $K_5$ by omitting two independent edges. 
Surprisingly, it is easy to see that the graph obtained from $K_5$ by omitting only one edge is Diophantine. 
We can simply consider a Diophantine quadruple, for instance, any member of the infinite parametrized family $(k-1,k+1, 4k, 16k^3-4k)$, and extend it by the upper regular extension $d_+$ of the three largest numbers; see Section~\ref{sec:remo}. 
This yields an infinite parametrized family of 5-tuples $(k-1,k+1, 4k, 16k^3-4k, 256k^5+256k^4-32k^3-64k^2+k+3)$ representing the graph obtained from $K_5$ by omitting an edge.  
Intuitively, one would expect that subgraphs of Diophantine graphs are also Diophantine: deleting an edge should pose a less demanding problem, as there are fewer expressions that need to be perfect squares. 
However, despite an extensive computer search, we have not found any representation of the pyramid graph. 

\begin{corollary}\label{cor:smalldio}
Let $G$ be a graph with at most five vertices that is not isomorphic to $K_5$ or the pyramid graph. 
Then $G$ is Diophantine. 
\end{corollary}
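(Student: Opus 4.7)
The plan is to combine explicit Diophantine representations of the minimum-degree-$\geq 3$ cases with an inductive extension argument in the style of Corollary~\ref{cor:3reg}.

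I would first show by a degree-sum calculation that any graph on at most five vertices with minimum degree at least three must be $K_4$ (for $n=4$, since on four vertices min degree $\geq 3$ forces 3-regularity, and $K_4$ is the unique 3-regular graph on four vertices) or one of $K_5$, $K_5$ minus an edge, and the pyramid (for $n=5$). Indeed, for $n=5$ the degree sum is even and at least $16$, so the only feasible degree sequences are $(4,3,3,3,3)$, $(4,4,4,3,3)$, and $(4,4,4,4,4)$, each realized by a unique graph (the pyramid, $K_5$ minus an edge, and $K_5$ respectively, by an elementary case analysis on the edges among the vertices of maximum degree). Consequently every graph $G$ on at most five vertices that is not among $\{K_4,\,K_5,\,K_5\text{ minus an edge},\,\text{pyramid}\}$ has a vertex of degree at most two.

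For the two dense non-excluded cases, the plan is to exhibit explicit representations. The Diophantine quadruple $\{1,3,8,120\}$ represents $K_4$ and has pairwise distinct square-free parts $1,3,2,30$. For $K_5$ minus an edge, the parametric family
\[
(k-1,\ k+1,\ 4k,\ 16k^3-4k,\ 256k^5+256k^4-32k^3-64k^2+k+3)
\]
from the discussion preceding the corollary does the job. For every other $G$, I would induct on $|V(G)|$ while maintaining the invariant that the produced representation has pairwise distinct square-free parts on all of its vertices. The base cases $|V(G)|\leq 3$ are settled by direct examples (e.g.\ $\{2,3,7\}$ for the edgeless graph on three vertices, $\{1,3,8\}$ for the triangle). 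For the inductive step, I pick a vertex $v$ of degree at most two in $G$, apply the induction hypothesis to $G-v$ (or the explicit representation above when $G-v\cong K_4$), and reattach $v$ via Lemma~\ref{lem1}, \ref{lem2}, or \ref{lem3} according as $\deg v$ is $0$, $1$, or $2$. Each of these lemmas supplies infinitely many valid choices whose square-free part avoids those already used, preserving the invariant; the ``different square-free parts'' hypothesis of Lemma~\ref{lem3} (needed when $\deg v=2$) is exactly the invariant applied to the two neighbors of $v$ in $G-v$.

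The main obstacle is $K_5$ minus an edge: because it has minimum degree three, no reduction via a low-degree vertex is possible, so an explicit construction is unavoidable. Fortunately this construction has already been produced before the statement of the corollary, so the only genuinely new content of the argument is the degree classification in the first step and the square-free-part bookkeeping in the induction; every remaining graph is either $K_4$ (handled explicitly) or built up from a smaller Diophantine representation by one of Lemmas~\ref{lem1}--\ref{lem3}.
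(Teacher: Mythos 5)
Your proposal is correct and follows essentially the same route as the paper: the same degree-sequence classification isolating $K_4$, $K_5$, $K_5$ minus an edge, and the pyramid as the only minimum-degree-$\geq 3$ cases; the same two explicit witnesses ($\{1,3,8,120\}$ for $K_4$ and the $k=2$ specialization for $K_5$ minus an edge); and the same induction reattaching a vertex of degree at most two via Lemmas~\ref{lem1}--\ref{lem3} while maintaining the distinct-square-free-part invariant so that the hypothesis of Lemma~\ref{lem3} is available.
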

\begin{proof}
Let us say that a graph has a good Diophantine representation if all numbers in the representation has different square-free part. 
By using Lemmas~\ref{lem1}, \ref{lem2}, and \ref{lem3}, we obtain that all graphs with at most three vertices have a good Diophantine representation. 
Moreover, according to Lemmas~\ref{lem1}, \ref{lem2}, and \ref{lem3}, if there is a vertex $w$ of degree at most 2 in a graph $G$ and $G-w$ has a good Diophantine representation, then $G$ also has a good Diophantine representation. 
This yields a good Diophantine representation for all graphs with at most four vertices, except for $K_4$. 
However, $K_4$ also has such a representation, e.g., $\{1,3,8,120\}$. 

It remains to verify the assertion for five-vertex graphs $G$. 
If there is a vertex of degree at most 2 in $G$, then we can proceed as above. 
Thus we may assume that all degrees are at least three in $G$. 
Since the sum of degrees is even and $G\not\cong K_5$, the only possible degree sequences of $G$ are $(4,3,3,3,3)$ and $(4,4,4,3,3)$. 
These degree sequences uniquely determine the isomorphism type of $G$. 
The first one is the pyramid graph, which we excluded from the assertion (since we do not know whether it is Diophantine). 
The second one is $K_5$ with an edge deleted, which is represented by $\{1,3,8,120,11781\}$ (see the parametrized 5-tuple above with $k=2$). 
\end{proof}

\begin{proof}[Proof of Theorem \ref{thm1}]
Parts i), ii) and iii) of the statement are immediately given by Lemmas \ref{lem1}, \ref{lem2} and \ref{lem3}, respectively.
\end{proof}

\section{Proof of the results concerning the number of edges of Diophantine graphs}

In this section we prove our Theorems \ref{thm:densegr}, \ref{thm3} and \ref{thm4}, separating the cases of lower- and upper bounds.

\subsection{Proof of the lower bound}

As we mentioned already, Dujella \cite{Duj08} has shown that there are asymptotically $\frac{6}{\pi^2}N\log N$ edges in $D(V_N)$ as $N\rightarrow \infty$, where $V_N=\{1,\ldots, N\}$. 
In other words, the asymptotic edge density of $D(V_N)$ is $\frac{6}{\pi^2}\log N$. 
Dujella's proof relies on the following lemma (see \cite{vin}, exercise 9 on page 98), which we shall also need. 
Recall that $\omega(d)$ is the number of different prime divisors of an integer $d$. 

\begin{lemma}\label{lem:squaremod}
Let $S(a)$ be the number of solutions of the congruence $x^2\equiv 1 \pmod{a}$. Then $S(a)\leq 2^{\omega(a)+1}$. 
More precisely, 
\begin{enumerate}
\item if $2\nmid a$, then $S(a)=2^{\omega(a)}$, 
\item if $2\mid a$ but $4\nmid a$, then $S(a)=2^{\omega(a)-1}$,  
\item if $4\mid a$ but $8\nmid a$, then $S(a)=2^{\omega(a)}$, and
\item if $8\mid a$, then $S(a)=2^{\omega(a)+1}.$
\end{enumerate}
\end{lemma}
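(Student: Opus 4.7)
The natural plan is to reduce the count to prime power moduli via the Chinese Remainder Theorem. Writing $a = 2^{k_0} p_1^{k_1} \cdots p_r^{k_r}$ with the $p_i$ distinct odd primes and $k_0 \geq 0$, CRT gives an isomorphism of unit groups $(\Z/a\Z)^* \cong (\Z/2^{k_0}\Z)^* \times \prod_{i=1}^r (\Z/p_i^{k_i}\Z)^*$, under which the condition $x^2 \equiv 1$ decouples into independent conditions on each factor. Hence $S(a) = S(2^{k_0}) \prod_{i=1}^r S(p_i^{k_i})$, with the convention $S(1) = 1$.

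For an odd prime power $p^k$ I would argue directly: $x^2 \equiv 1 \pmod{p^k}$ means $p^k \mid (x-1)(x+1)$, and since $\gcd(x-1, x+1)$ divides $2$ while $p$ is odd, the entire factor $p^k$ must divide one of the two, yielding exactly the two solutions $x \equiv \pm 1 \pmod{p^k}$. So $S(p_i^{k_i}) = 2$ for every $i$, contributing a factor $2^r$ to the product.

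The more delicate step, and the main thing that requires care, is counting $S(2^{k_0})$. I would either quote the standard structure theorem for $(\Z/2^k\Z)^*$ (trivial for $k = 1$, cyclic of order $2$ for $k = 2$, and isomorphic to $\Z/2\Z \times \Z/2^{k-2}\Z$ for $k \geq 3$), or give a short hands-on proof: writing $x = 1 + 2t$ gives $x^2 - 1 = 4t(t+1)$, and since $t(t+1)$ is automatically even, the condition $2^{k_0} \mid x^2 - 1$ reduces to $2^{k_0-3} \mid t(t+1)/2$ when $k_0 \geq 3$; since $\gcd(t, t+1) = 1$, a short case analysis shows this has exactly four solution classes modulo $2^{k_0}$. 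The outcome is $S(2) = 1$, $S(4) = 2$, and $S(2^{k_0}) = 4$ for all $k_0 \geq 3$.

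To finish, I would combine the counts, noting that $\omega(a) = r$ if $k_0 = 0$ and $\omega(a) = r + 1$ if $k_0 \geq 1$. In the four cases $2 \nmid a$, $2 \parallel a$, $4 \parallel a$, $8 \mid a$, we obtain $S(a) = 2^r,\, 2^r,\, 2 \cdot 2^r,\, 4 \cdot 2^r$, which translate to $2^{\omega(a)},\, 2^{\omega(a)-1},\, 2^{\omega(a)},\, 2^{\omega(a)+1}$ respectively, establishing the four itemized formulas and immediately yielding the uniform bound $S(a) \leq 2^{\omega(a)+1}$. The only real point of care in the argument is the 2-adic analysis, where the non-monotone jump of $S(2^{k_0})$ at $k_0 = 3$ is the reason the bound in the lemma is not simply $2^{\omega(a)}$.
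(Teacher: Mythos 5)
Your proof is correct and is the standard argument. Note that the paper does not actually prove this lemma; it simply cites it as exercise 9 on page 98 of Vinogradov's \emph{Elements of Number Theory}, so there is no in-paper proof to compare against. Your CRT decomposition $S(a) = S(2^{k_0}) \prod_i S(p_i^{k_i})$, the count $S(p^k)=2$ for odd primes via the coprimality of $x-1$ and $x+1$ up to a factor of $2$, the $2$-adic case analysis giving $S(2)=1$, $S(4)=2$, $S(2^k)=4$ for $k\geq 3$, and the bookkeeping to translate $r$ (the number of odd prime divisors) into $\omega(a)$ in each of the four cases are all correct, and you correctly identify that the jump at $2^3$ is what forces the $+1$ in the exponent of the uniform bound.
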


We can use a similar approach to that of Dujella's to estimate the degree of vertices in $D(V_N)$. 

\begin{lemma}
\label{lem:degree}
Let $1\leq a\leq N$. 
Then the degree of $a$ in $D(V_N)$ is at most $8\sqrt{N/a}\cdot 2^{\omega(a)}$.
\end{lemma}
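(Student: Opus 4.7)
The plan is to translate the edge condition into a congruence condition modulo $a$, count the corresponding $x$-values in a bounded interval using Lemma~\ref{lem:squaremod}, and then do a direct numerical estimate.

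First I would reformulate: if $b\in V_N\setminus\{a\}$ is a neighbor of $a$ in $D(V_N)$, then $ab+1=x^2$ for some integer $x\geq 2$, so $x^2\equiv 1\pmod a$. Conversely, since $b=(x^2-1)/a$, each admissible $x$ determines $b$ uniquely, and the constraint $b\leq N$ gives $x\leq\sqrt{aN+1}$. Therefore the degree of $a$ is at most the number of integers $x$ in the interval $[1,\sqrt{aN+1}]$ satisfying $x^2\equiv 1\pmod a$.

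Next, by Lemma~\ref{lem:squaremod} the congruence $x^2\equiv 1\pmod a$ has $S(a)\leq 2^{\omega(a)+1}$ solutions in any complete residue system modulo $a$. Splitting $[1,\sqrt{aN+1}]$ into at most $\lceil\sqrt{aN+1}/a\rceil$ blocks of length $a$, the count of admissible $x$ is bounded by
$$
\left\lceil\frac{\sqrt{aN+1}}{a}\right\rceil\cdot S(a)\leq\left(\sqrt{\frac{N+1/a}{a}}+1\right)\cdot 2^{\omega(a)+1}.
$$

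Finally, it remains to simplify this to $8\sqrt{N/a}\cdot 2^{\omega(a)}$. Using $a\leq N$ (so $N/a\geq 1$ and $1/a\leq N/a$) we have $\sqrt{(N+1/a)/a}\leq\sqrt{2N/a}$ and $1\leq\sqrt{N/a}$, hence $\sqrt{(N+1/a)/a}+1\leq(\sqrt{2}+1)\sqrt{N/a}\leq 4\sqrt{N/a}$. Multiplying by $2^{\omega(a)+1}=2\cdot 2^{\omega(a)}$ yields the claimed bound $8\sqrt{N/a}\cdot 2^{\omega(a)}$. The argument is routine; the only care needed is in the final numerical estimate, and the constant $8$ is comfortably larger than what is strictly required, which gives some slack for the rounding from the ceiling function.
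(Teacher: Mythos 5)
Your proof is correct and follows essentially the same approach as the paper: reduce the edge condition to $x^2\equiv 1\pmod a$, split the interval of admissible $x$ into blocks of length $a$, apply Lemma~\ref{lem:squaremod} per block, and finish with a numerical estimate. The only cosmetic difference is that you use the tight bound $x\leq\sqrt{aN+1}$ where the paper uses the looser $r\leq 2\sqrt{aN}$; both routes land comfortably within the factor $8$.
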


\begin{proof}
An integer $1\leq b\leq N$ is linked to $a$ by an edge in $D(V_N)$ if and only if there is an integer $r$ such that $ab+1=r^2$. 
Thus the $\pmod a$ residue class of $r$ is the solution to the congruence $x^2\equiv 1 \pmod{a}$. 
Further, as $b\leq N$, we have $r\leq 2\sqrt{aN}$. 
Let us partition the numbers $\{1,\ldots, a\cdot \lceil 2\sqrt{aN}/a\rceil\}$ into blocks of consecutive integers of length $a$. 
The number of blocks is $\lceil 2\sqrt{aN}/a\rceil \leq 4\sqrt{N/a}$, and in each block, there are at most $2^{\omega(a)+1}$ solutions $r$ according to Lemma~\ref{lem:squaremod}. 
\end{proof}

Note that the coefficient $8$ in the estimate could be easily improved, but it is not going to play an important role. 
The degree of $a$ is always at least $\lfloor \sqrt{N/a}\rfloor\cdot 2^{\omega(a)-1}$, and for most numbers below $N$, it should be close to $\sqrt{N/a}\cdot 2^{S(a)}$.

\begin{corollary}\label{cor:totdeg}
Let $\delta>0$, $C>1$ and $t\in \mathbb{N}$. 
Then for $N$ large enough, the total degree in the graph $D(V_N)$ of all numbers $a$ in the interval $[N(\log N)^{-t\delta}, N(\log N)^{-(t-1)\delta}]$ such that $\omega(a)\leq C\log\log N$ is at most $8N(\log N)^{C\log 2 +\delta/2}$. 
\end{corollary}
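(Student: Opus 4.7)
\smallskip
\noindent\textbf{Proof plan.} The strategy is a direct computation: apply the individual degree bound from Lemma~\ref{lem:degree} to each $a$ in the specified interval, exploit the two hypotheses (the lower bound on $a$ and the cap on $\omega(a)$) to obtain a uniform per-vertex estimate, and then multiply by the cardinality of the interval.

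First, for any $a\in I_t:=[N(\log N)^{-t\delta},\,N(\log N)^{-(t-1)\delta}]$, the lower bound on $a$ gives
\[
\sqrt{N/a}\ \leq\ (\log N)^{t\delta/2}.
\]
Next, the hypothesis $\omega(a)\leq C\log\log N$ together with $2^{C\log\log N}=(\log N)^{C\log 2}$ yields
\[
2^{\omega(a)}\ \leq\ (\log N)^{C\log 2}.
\]
Substituting into Lemma~\ref{lem:degree} gives the uniform per-vertex bound
\[
\deg_{D(V_N)}(a)\ \leq\ 8\,(\log N)^{C\log 2\,+\,t\delta/2}
\]
for every admissible $a\in I_t$.

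The number of integers in $I_t$ is at most $N(\log N)^{-(t-1)\delta}$ (the $+1$ additive term is absorbed for $N$ large), so summing the per-vertex estimate over all admissible $a\in I_t$ yields a total bounded by
\[
8\,N\,(\log N)^{C\log 2\,+\,\delta(1-t/2)}.
\]
For $t=1$ the exponent equals $C\log 2+\delta/2$, which exactly matches the claim; for every $t\geq 2$ the exponent is strictly smaller, so the same bound $8N(\log N)^{C\log 2+\delta/2}$ dominates. This completes the computation.

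\smallskip
\noindent\textbf{Where the difficulty lies.} There is no real obstacle here — Lemma~\ref{lem:degree} does all the heavy lifting, and the only thing to verify is that the two competing factors $(\log N)^{t\delta/2}$ coming from $\sqrt{N/a}$ and $(\log N)^{-(t-1)\delta}$ coming from the length of $I_t$ balance favorably. The worst case is the topmost interval $t=1$, where the interval has length $\Theta(N)$ and one pays the full $(\log N)^{\delta/2}$ from $\sqrt{N/a}$; this is precisely the reason for the exponent $C\log 2+\delta/2$ in the statement rather than just $C\log 2$. The role this corollary will play is clear: it shows that if one discards the integers $a\leq N$ whose prime-divisor count exceeds $C\log\log N$, the remaining contribution to the edge count coming from any dyadic-in-$\log$ slice above $N(\log N)^{-t\delta}$ is only $N(\log N)^{C\log 2+\delta/2}$, which is the mechanism for pushing past the $N\log N$ lower bound via the Sathe--Selberg machinery in the proof of Theorem~\ref{thm:densegr}.
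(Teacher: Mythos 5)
Your proof is correct and follows exactly the same route as the paper: apply Lemma~\ref{lem:degree}, bound $\sqrt{N/a}$ by $(\log N)^{t\delta/2}$ via the lower endpoint of the interval, bound $2^{\omega(a)}$ by $(\log N)^{C\log 2}$ via the hypothesis, multiply by the trivial count $N(\log N)^{-(t-1)\delta}$ of integers in the interval, and observe that the resulting exponent $(1-t/2)\delta + C\log 2$ is maximized at $t=1$. The only cosmetic difference is that you make the extremal role of $t=1$ explicit, whereas the paper just writes the final inequality and appeals to $N$ large.
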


\begin{proof}
There are at most $N(\log N)^{-(t-1)\delta}$ such numbers $a$. 
For each such $a$ we have $a\geq N(\log N)^{-t\delta}$, thus $\sqrt{N/a}\leq (\log N)^{t\delta/2}$. 
According to Lemma~\ref{lem:degree}, each of them has a degree at most $8(\log N)^{t\delta/2}\cdot 2^{C\log\log N}=8(\log N)^{C\log 2 + t\delta/2}$. 
Thus the total degree on the interval is bounded by the product $8N(\log N)^{(1-t/2)\delta+C\log 2} \leq 8N(\log N)^{C\log 2+\delta/2} $ for $N$ large enough.
\end{proof}

Since our argument to prove Theorem \ref{thm:densegr} is rather involved, we give some explanation of our strategy. 
We shall start out from $V_N$, and leave out vertices of ``small'' degree to increase the edge density $\Theta(\log N)$ of the graph $D(V_N)$. 
Note that the omission of a vertex $v$ from a graph $G$ increases the average degree if and only if $\deg(v)$ is less than the edge density, that is, half of the average degree in $G$. 
Hence, the natural idea would be to omit every vertex from $D(V_N)$ whose degree is less than $\frac{6}{\pi^2}\log N$. 
As we will see, asymptotically $100\%$ of the vertices have degree less than $\frac{6}{\pi^2}\log N$, and they cover asymptotically $0\%$ of the edges. 
More precisely, only $N/(\log N)^{c_1}$ vertices would remain after this operation with some $c_1>0$. 
This means that in the graph obtained, the average degree has order of magnitude $\Theta\left((\log N)^{1+c_1}\right)$ rather than $\log N$. 
Thus we could iterate the same process, and omit vertices with degree less than $(\log N)^{1+c_1}$ (approximately), only to realize that the average degree increases to the order of magnitude $\Theta\left((\log N)^{1+c_2}\right)$ with some $c_2>c_1$. 

One can heuristically determine the limit $c$ of the sequence $c_1, c_2, \ldots$, which is $c=2\log 2 -1$. 
(This will be clear from the proof of Theorem \ref{thm:densegr}.) 
Hence, the rough idea is to show that for any $\varepsilon>0$, asymptotically $100\%$ of the $\Theta(N\log N)$ edges survive if we omit vertices from $D(V_N)$ with degree at most $(\log N)^{2\log 2 - \varepsilon}$, and at the same time, only about $N(\log N)^{1-2\log 2 +\varepsilon}$ vertices remain. 
This yields a graph with edge density at least $(\log N)^{2\log 2 - \varepsilon}$ rather than $\Theta(\log N)$. 

To achieve our goal, we shall need a classical Hardy-Ramanujan type theorem from the literature. 
In \cite{HR17}, Hardy and Ramanujan showed that the normal order of $\omega(a)$ is $\log\log a$. 
Since then, the theorem was extended in many different directions; see \cite{HT88}. 
The variant we need was found by Sathe~\cite{Sat53} and (simplified by) Selberg~\cite{Sel54}. 
More precisely, we use the following consequence of their result, where $\pi(x,k)$ denotes the number of positive integers $a\leq x$ with $\omega(a)=k$.

\begin{theorem}[Sathe-Selberg]\label{thm:SatSel}
Let $C>1$ be a fixed constant. Then for all $x\geq 3$ we have 
$$
\sum\limits_{k>C\log\log x} \pi(x,k) \leq \frac{x}{\log x}\cdot \sum\limits_{k>C\log\log x} \frac{(\log\log x)^{k-1}}{(k-1)!}.
$$
\end{theorem}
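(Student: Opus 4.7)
The plan is to deduce the stated tail bound from the Sathe-Selberg asymptotic for $\pi(x,k)$, which asserts that uniformly for $1\le k\le A\log\log x$ (any fixed $A>1$),
\[
\pi(x,k)=\frac{x(\log\log x)^{k-1}}{(k-1)!\log x}\left(F\!\left(\frac{k-1}{\log\log x}\right)+O_A\!\left(\frac{1}{\log\log x}\right)\right),
\]
where $F(z)=\frac{1}{\Gamma(z+1)}\prod_p\bigl(1+\frac{z}{p-1}\bigr)(1-\frac{1}{p})^{z}$ is Selberg's correction factor. The first step is to recall the classical fact that $F(1)=1$ and $F$ is non-increasing on $[1,\infty)$; this can be verified by logarithmic differentiation, rewriting $F'/F$ in terms of $-\psi(z+1)$ plus a sum over primes that is controlled by Mertens' theorem. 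Hence, for $C>1$ fixed and $k>C\log\log x$ (within the Sathe-Selberg range), we obtain
\[
\pi(x,k)\le (1+o_A(1))\cdot\frac{x(\log\log x)^{k-1}}{(k-1)!\log x}.
\]

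Next I would handle the tail $k>A\log\log x$ separately by choosing $A$ large. On the left, the Hardy-Ramanujan Chebyshev-style estimate, obtained from an exponential-moment argument such as $\sum_{n\le x}z^{\omega(n)}\ll x(\log x)^{z-1}$ combined with Markov's inequality, yields $\sum_{k>A\log\log x}\pi(x,k)\ll x/(\log x)^{\gamma(A)}$ with $\gamma(A)\to\infty$ as $A\to\infty$. On the right, Stirling's formula gives an exponentially decaying tail $\sum_{k>A\log\log x}(\log\log x)^{k-1}/(k-1)!$, of size roughly $e^{-A\log\log x\cdot(\log A-1)}$. Choosing $A$ large enough, both tails are negligible compared with the main contribution from $C\log\log x<k\le A\log\log x$, and summing the Sathe-Selberg upper bound over that middle range yields the claimed inequality up to a factor $1+o(1)$.

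The main obstacle will be absorbing this $1+o(1)$ factor, since the statement of the theorem contains no implicit constant. There are two natural routes: either use a quantitatively sharper form of Sathe-Selberg (for instance, the Hildebrand-Tenenbaum or Norton version) that supplies an explicit bound on the error term, or strengthen the monotonicity of $F$ to a uniform estimate $F(z)\le 1-\eta(C)$ for $z\ge C>1$, so that the $O_A(1/\log\log x)$ error in the Sathe-Selberg formula is dominated for $x$ large enough. The remaining bounded range of $x\ge 3$ where $\log\log x$ is not yet large can then be treated by direct computation, after which the inequality holds for all $x\ge 3$ as claimed.
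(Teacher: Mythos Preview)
The paper does not prove this theorem. It is stated without proof as a known result, introduced by the sentence ``we use the following consequence of their result'' (referring to Sathe~\cite{Sat53} and Selberg~\cite{Sel54}), and is immediately used only to derive Corollary~\ref{cor:SaSe}. There is therefore no argument in the paper to compare your sketch against.

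As to your outline itself: the overall plan---Sathe--Selberg in the middle range, a crude large-deviation bound for very large $k$, then an absorption argument---is the natural one, and you have correctly isolated the real difficulty, namely that the statement is written with constant exactly $1$ and for \emph{all} $x\ge 3$. A few comments. Your appeal to $F$ being non-increasing on $[1,\infty)$ is plausible but not something one can simply assert; the logarithmic derivative of $F$ involves $-\psi(z+1)+\sum_p\bigl(\frac{1}{p-1+z}+\log(1-\tfrac1p)\bigr)$, and turning this into a clean monotonicity statement requires genuine work (and even then gives you only $F(z)\le 1$, not $F(z)\le 1-\eta(C)$, without further argument). Your fallback of ``direct computation'' for $x$ below a threshold is also not quite a proof: for each fixed $x$ the left-hand side is a finite sum, but the threshold depends on both $C$ and the size of the implicit constants in the Sathe--Selberg error term, which you have not made explicit.

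It is worth noting that the paper only \emph{uses} this theorem through Corollary~\ref{cor:SaSe}, whose conclusion is stated ``for $x$ large enough'' and involves explicit slack. So an inequality with an implied constant in place of the exact factor $1$, valid for large $x$, would be entirely sufficient for the paper's purposes---and that weaker form is exactly what your first two steps already deliver. The sharp form as literally stated may well require a more delicate argument than anything hinted at either in the paper or in your proposal.
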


The sum on the right can be estimated very accurately: it is the tail sum of the Taylor series of the exponential function with $\log\log x$ substituted into it. 
Note that two consecutive terms in the sum, namely the $k$-th and $(k+1)$-th terms have ratio $\frac{\log\log x}{k}<1/C$, thus the tail sum is dominated by a power series with quotient $1/C<1$. 
Hence, the tail sum is at most $\frac{C}{C-1}$ times the first term in the sum, that is, the term where $k=\lceil C\log\log x \rceil$ is substituted. 
This term can be computed asymptotically using Stirling's formula (i.e., $k!\sim \sqrt{2\pi k}(k/e)^k$), to obtain the following estimate. 
 
\begin{corollary}\label{cor:SaSe}
$$\sum\limits_{k>C\log\log x} \pi(x,k) < \frac{x}{\log x}\cdot \frac{C}{C-1}\cdot \frac{(e/C)^{C\log\log x}}{\sqrt{2\pi C\log\log x}} < x(\log x)^{C-C\log C-1}$$
for any fixed $C>1$ and $x$ large enough.
\end{corollary}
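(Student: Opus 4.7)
The strategy, already outlined in the paragraph preceding the statement, consists of two easy steps: bound the tail sum from Theorem~\ref{thm:SatSel} by a geometric series and estimate its leading term via Stirling's formula, then rewrite $(e/C)^{C\log\log x}$ as a power of $\log x$ and note that the remaining factor is $o(1)$. No deep ideas are needed; the only minor subtlety is making the Stirling asymptotic rigorous by absorbing the $1+o(1)$ error (including the bounded discrepancy between $k_0-1$ and $C\log\log x$ introduced by taking the ceiling) into the slack of the strict inequality, which is supplied by the $1/\sqrt{\log\log x}$ factor produced by Stirling.

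For the first inequality, write $a_k := (\log\log x)^{k-1}/(k-1)!$ and let $k_0$ be the least integer exceeding $C\log\log x$. Since $a_{k+1}/a_k = (\log\log x)/k$ is strictly less than $1/C$ for every $k \geq k_0$, the tail is majorized by a geometric series:
$$\sum_{k > C\log\log x} a_k \;\leq\; a_{k_0}\sum_{j\geq 0} C^{-j} \;=\; \frac{C}{C-1}\, a_{k_0}.$$
Stirling's formula $(k_0-1)! = (1+o(1))\sqrt{2\pi(k_0-1)}\bigl((k_0-1)/e\bigr)^{k_0-1}$, together with $k_0 - 1 = C\log\log x + O(1)$, gives after a short simplification
$$a_{k_0} \;\leq\; (1+o(1))\,\frac{(e/C)^{C\log\log x}}{\sqrt{2\pi C\log\log x}}.$$
Plugging this back into Theorem~\ref{thm:SatSel} yields the first inequality for $x$ sufficiently large.

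For the second inequality, observe that $\log(e/C) = 1 - \log C$, so $(e/C)^{C\log\log x} = (\log x)^{C - C\log C}$. Substituting this into the middle expression produces
$$\frac{x}{\log x}\cdot\frac{C}{C-1}\cdot\frac{(\log x)^{C-C\log C}}{\sqrt{2\pi C\log\log x}} \;=\; x(\log x)^{C-C\log C-1}\cdot\frac{C}{(C-1)\sqrt{2\pi C\log\log x}}.$$
Since $C > 1$ is fixed, the trailing factor tends to $0$ as $x \to \infty$ and is therefore strictly less than $1$ for all $x$ sufficiently large, which establishes the second inequality and completes the proof.
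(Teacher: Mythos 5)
Your proof follows the same route as the paper: bound the tail by a geometric series with ratio $1/C$ (giving the factor $C/(C-1)$ times the leading term), apply Stirling, rewrite $(e/C)^{C\log\log x}$ as $(\log x)^{C-C\log C}$, and absorb the remaining $\Theta(1/\sqrt{\log\log x})$ factor into the slack. One small imprecision that your write-up shares with the paper's own sketch: the discrepancy between $k_0-1=\lfloor C\log\log x\rfloor$ and $C\log\log x$ contributes a multiplicative factor of size $C^{\{C\log\log x\}}$, which is $O(1)$ but not $1+o(1)$, so the displayed first inequality only holds up to a bounded constant; the final bound $x(\log x)^{C-C\log C-1}$ is unaffected because, as you note, the $1/\sqrt{\log\log x}$ factor supplies unbounded slack.
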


\begin{proof}[Proof of Theorem~\ref{thm:densegr}]
As the assertion only becomes stronger by decreasing $\varepsilon$, we may assume that $\varepsilon=1/S$ for some $S\in \mathbb{N}$, $S\geq 1000$. 

The differentiable function
$$
g(C):=C(1+\log 2)-C\log C -1
$$
has derivative $g'(C)=\log 2 - \log C$, hence $g$ is strictly monotonically increasing in the interval $[1,2]$. Moreover, we have $g(2)=1$. 
Thus there exists a $\delta>0$ such that $g(2-\varepsilon) + 5\delta < 1$. 
We may assume that $\delta=\varepsilon/(5R)=1/(5RS)$ for some $R\in \mathbb{N}$. 
Note that $R,S,\delta$ are fixed positive constants as soon as $\varepsilon>0$ is fixed.

We partition the vertices in $V_N$ into the following $10RS+1$ blocks; the first one is special, the rest follows a pattern indicated by the running index $t$: 
$$[1,N(\log N)^{-2}[,$$ 
$$[N(\log N)^{-(10RS)\delta},N(\log N)^{-(10RS-1)\delta}[,$$ 
$$[N(\log N)^{-(10RS-1)\delta},N(\log N)^{-(10RS-2)\delta}[, \ldots$$
$$[N(\log N)^{-t\delta},N(\log N)^{-(t-1)\delta}[, \ldots$$
$$[N(\log N)^{-\delta},N].$$

We keep all the numbers in the first, special block $[1,N(\log N)^{-2}[$. 
In the rest of the blocks, we only keep those numbers $a$ such that $\omega(a)>(2-\varepsilon)\log\log N$; the rest of the numbers are omitted from $V_N$. 
We define the constants $C_0=1.2=2 - 4RS\delta$, $C_1=2 - (4RS-1)\delta$, $\ldots$, $C_q=2 - (4RS-q)\delta, \ldots$, $C_{4RS-5R}=2 - \varepsilon$. 
We omit vertices from the last $10RS$ blocks iteratively: in step $0$, we omit those $a$ with $\omega(a)\leq C_0\log\log N$, and in every other step $q+1$, we omit those $a$ with $C_{q}\log\log N< \omega(a)\leq C_{q+1}\log\log N$. 
Note that by the end of this process, exactly the desired vertices are omitted. 

The total number of edges lost in step 0 is estimated by Corollary~\ref{cor:totdeg}. 
In each of the $10RS$ blocks, it is at most $8N(\log N)^{C_0\log 2 + \delta/2}$, totaling at most $80RSN(\log N)^{1.2\cdot\log 2 + \delta/2}< N(\log N)^{0.9}$ for $N$ large enough. 
Note that this is a negligible portion of the $\Theta(N\log N)$ edges.

The rest of the steps is treated similarly as the proof of Corollary~\ref{cor:totdeg}. 
The difference is that rather than using the trivial estimate for the number of vertices in an interval (i.e., the right endpoint of the interval), we apply Corollary~\ref{cor:SaSe}. 
Namely, in step $q+1$, we omit those $a$ from each of the $10RS$ intervals such that $C_{q}\log\log N< \omega(a)\leq C_{q+1}\log\log N$. 
The number of these vertices in the interval $[N(\log N)^{-t\delta}, N(\log N)^{-(t-1)\delta}[$ is clearly at most the number of those $a\leq y$ such that $C_{q}\log\log y< \omega(a)$, where $y=N(\log N)^{-(t-1)\delta}$. 
Hence, according to Corollary~\ref{cor:SaSe}, the number of such vertices is at most $y(\log y)^{C_q-C_q\log C_q-1}$. 
This yields the upper estimate 
$$N(\log N)^{-(t-1)\delta}\left(\log\left(N(\log N)^{-(t-1)\delta}\right)\right)^{C_q-C_q\log C_q-1}\leq $$
$$N(\log N)^{-(t-1)\delta}\left(\log N\right)^{C_q-C_q\log C_q-1} = N(\log N)^{C_q-C_q\log C_q-1-(t-1)\delta}$$
for the number of vertices omitted from the above interval in step $q+1$, for $N$ large enough. 
According to Lemma~\ref{lem:degree}, such a vertex $a$ has degree at most $8\sqrt{N/a}\cdot 2^{\omega(a)}$. 
Here, $a\geq N(\log N)^{-t\delta}$ making $N/a\leq (\log N)^{t\delta}$, and by assumption we have $\omega(a)\leq C_{q+1}\log\log N = C_{q+1}\log 2 \cdot \log_2\log N$. 
Thus all degrees are bounded by 
$$8(\log N)^{t\delta/2}(\log N)^{C_{q+1}\log 2} = 8(\log N)^{t\delta/2+(C_q+\delta)\log 2}.$$

Multiplying the two estimates by each other and by the number of intervals $10RS$ we obtain the bound 
$$160RSN(\log N)^{C_q-C_q\log C_q-1-(t-1)\delta+t\delta/2+(C_q+\delta)\log 2}=$$
$$160RSN(\log N)^{C_q(1+\log 2)-C_q\log C_q-1+(1+\log2-t/2)\delta}\leq$$
$$N(\log N)^{C_q(1+\log 2)-C_q\log C_q-1+2\delta} = N(\log N)^{g(C_q)+2\delta}$$
for the total number of lost edges in step $q+1$, for $N$ large enough. 
Taking the sum over all steps $1,2,\ldots,4RS-5R$, and by observing that all the $C_q$ are at most $2-\varepsilon$, and that the function $g(C)$ is strictly monotonically increasing on $[1,2]$, we have the upper estimate to the total number of omitted edges 
$$(4RS-5R)N(\log N)^{C(1+\log 2)-C\log C-1+2\delta}\leq $$
$$N(\log N)^{C(1+\log 2)-C\log C-1+3\delta}$$
with $C=2-\varepsilon$, for $N$ large enough. 
This expression clearly exceeds $N(\log N)^{0.9}$ for $N$ large enough (the exponent of $\log N$ is close to $1$), which is the upper estimate obtained in step 0. 
Hence, at the cost of an extra $\delta$ in the exponent, we can arrive at un upper estimate for the number of all omitted edges, including those omitted at step 0, namely 
$$N(\log N)^{C(1+\log 2)-C\log C-1+4\delta}$$
for $N$ large enough. 
As $C=2-\varepsilon$, due to the choice of $\delta$, this is at most $N(\log N)^{1-\delta}$. 
Thus only a negligible portion of the $\Theta(N\log N)$ edges were omitted throughout the process. 

The number of non-omitted vertices can be estimated by applying Corollary~\ref{cor:SaSe}. 
We kept all positive integers in the first, special interval $[1,N(\log N)^{-2}[$: this contributes at most $N(\log N)^{-2}$ vertices. 
From the rest of $V_N$, we only kept those vertices $a$ such that $\omega(a)>(2-\varepsilon)\log\log N$. 
For simplicity, we estimate their number from above by the number of those $a\leq N$ such that $\omega(a)>(2-\varepsilon)\log\log N$. 
According to Corollary~\ref{cor:SaSe}, $N(\log N)^{C-C\log C-1}$ with $C=2-\varepsilon$ is a correct upper estimate for this number, provided that $N$ is large enough. 
The function $h(C):=C-C\log C-1$ defined on $[1,2]$ has derivative $h'(C)=-\log(C)$, thus it is strictly monotonically decreasing on $[1,2]$. 
The derivative $h'(C)=-\log(C)$ is greater than $-0.7$ on the whole domain. 
Thus $h(2-\varepsilon)<h(2)+0.7\cdot\varepsilon=1-2\log 2+0.7\cdot\varepsilon$. 
Hence, in the non-special intervals, at most $N(\log N)^{1-2\log 2+0.7\cdot\varepsilon}$ vertices remain after the process. 
Together with the negligible amount $N(\log N)^{-2}$ in the first interval (note that $1-2\log 2+0.7\cdot\varepsilon> -0.4>-2$) it is still less than $N(\log N)^{1-2\log 2+0.8\cdot\varepsilon}$ vertices remaining, for $N$ large enough. 
As asymptotically $\frac{6}{\pi^2}N\log N$ edges remained, that is, more than $0.5\cdot N\log N$ for $N$ large enough, this yields an edge density in the remaining graph at least 
$$(0.5\cdot N\log N)/\left(N(\log N)^{1-2\log 2+0.8\cdot\varepsilon}\right) > (\log N)^{2\log 2-\varepsilon}$$
for $N$ large enough. 
As the number of remaining vertices $n$ is clearly at most $N$, we can conclude that the edge density in this $n$-element graph is at least $(\log n)^{2\log 2-\varepsilon}$.
\end{proof}

The constant $2\log 2\approx 1.386$ is optimal in the sense that the same proof idea cannot work with any larger constant. 
If we omit all (large enough) vertices $a$ from $V_N$ such that $\omega(a)>2\log\log N$, then the upper estimate for the total lost degree is $\Theta\left(N(\log N)^{g(2)}\right) = \Theta(N\log N)$, which is comparable to the number of edges. 
Of course, this does not mean that a modified proof could no longer work. 
Using a more refined result than Corollary~\ref{cor:SaSe} of the Sathe-Selberg theorem might clarify how many edges survive as $\omega(a)$ breaks the $2\log\log N$ barrier. 
If in that region edges disappear at a much slower rate than vertices, then the edge density could still increase by omitting larger degree vertices from the graph $V_N$. 

\subsection{Proof of the upper bounds}

We start with the proof of our unconditional result. 
In fact, the statement is a simple consequence of two seminal results.

\begin{proof}[Proof of Theorem \ref{thm3}]
By the main result of He, Togb\'e and Ziegler \cite{HTZ19} we know that $D(V)$ cannot contain a $K_5$. 
Thus the statement is a simple consequence of Tur\'an's theorem \cite{Tu41} with $k=5$.
\end{proof}

Now we give the proof of our conditional statement. 
For this, a deep result of Hindry and Silverman \cite{hs} will be vital. 
To its formulation we need to introduce some notation. 
Note that we shall simplify the general algebraic situation considered in \cite{hs} to the rational case, which is sufficient for our present purposes. 
Let $A,B$ be integers such that the discriminant $D_E=-4A^3-27B^2$ of the polynomial $x^3+Ax+B$ is not zero, and consider the elliptic curve
$$
E:\ \ \ y^2=x^3+Ax+B
$$
over $\mathbb Q$. 
Assume that the above model is minimal for $E$; that is, $|D_E|$ is minimal over the different models of $E$ as above. 
In other words, suppose that if $\ell$ is a positive integer with $\ell^4\mid A$ and $\ell^6\mid B$, then $\ell=1$. 
Further, let $S$ be a finite set of primes, and let ${\mathbb Z}_S$ be the set of rationals such that (in their primitive forms) all prime factors of their denominators belong to $S$. 
Finally, write $r_E$ and $C_E$ for the rank and conductor of $E$, respectively.

\begin{theorem}[Hindry and Silverman]
\label{thmhs}
There exists an absolute constant $C_0$ such that the number of points on $E$ from ${\mathbb Z}_S^2$ is at most
$$
C_0^{|S|+1+(1+r_E)\frac{\log |D_E|}{\log C_E}}.
$$
\end{theorem}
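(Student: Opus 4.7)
The proof strategy centers on the canonical (N\'eron-Tate) height $\hat{h}:E(\mathbb{Q})\to\mathbb{R}_{\geq 0}$ and the Mordell-Weil structure $E(\mathbb{Q})/E(\mathbb{Q})_{\mathrm{tors}}\cong\mathbb{Z}^{r_E}$. Equipping $V:=E(\mathbb{Q})\otimes\mathbb{R}\cong\mathbb{R}^{r_E}$ with the inner product induced by $\hat{h}$, I would view the $S$-integral points as lattice vectors in $V$ and organize them into dyadic spherical shells $\{P:T\leq \hat{h}(P)<2T\}$. Since $|E(\mathbb{Q})_{\mathrm{tors}}|$ is absolutely bounded (Mazur), the torsion contributes only a multiplicative constant, and I reduce to counting $S$-integral points modulo torsion.

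The heart of the proof is a quantitative $S$-integral gap principle: if $P,Q\in E(\mathbb{Z}_S)$ are distinct with images in $V$ lying in the same dyadic shell and subtending an angle at most $\pi/3$, then on one hand the parallelogram law in $V$ gives $\hat{h}(P-Q)\leq (1-c)\hat{h}(P)$ for a fixed $c>0$, while on the other hand $S$-integrality forces $\hat{h}(P-Q)$ to be bounded \emph{below} by an explicit quantity depending on $|S|$ and on the Szpiro ratio $\alpha:=\log|D_E|/\log C_E$. The lower bound would be obtained by comparing $\hat{h}$ to the naive Weil height via the standard estimate $|\hat{h}-\tfrac12 h_{\mathrm{naive}}|\ll \log\max(|A|,|B|)$, combined with explicit local-height formulas $\hat{\lambda}_v$ at the archimedean place and at primes of good and bad reduction (Tate-curve data). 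Here is where $\alpha$ enters: the contribution of primes of bad reduction to the naive-versus-canonical discrepancy is controlled by $\log|D_E|\leq \alpha\log C_E$, and $S$-integrality prevents cancellation at primes outside $S$.

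Granting the gap principle, a sphere-packing estimate in $\mathbb{R}^{r_E}$ (of Kabatyanskii-Levenshtein type) bounds the number of $S$-integral points in each dyadic shell by $C_1^{r_E}$ for an absolute $C_1$, since all pairs of distinct points must subtend an angle $>\pi/3$. It then remains to bound the number of nonempty dyadic shells. A Silverman-type effective lower bound for the minimum of $\hat{h}$ on non-torsion points yields $\hat{h}_{\min}\gg (|S|+\alpha)^{-1}$, and an effective upper bound $\hat{h}(P)\ll |S|+\alpha$ for $S$-integral $P$ follows from summing the explicit local heights over $v\in S\cup\{\infty\}$ together with the bad-reduction places. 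The logarithm of the ratio of these two bounds produces at most $O(|S|+\alpha)$ shells, so multiplying by $C_1^{r_E}$ and the torsion factor yields $C_0^{|S|+1+(1+r_E)\alpha}$ after absorbing polynomial factors into the base.

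The principal technical obstacle is the $S$-integral gap principle with the correct dependence on $\alpha$: I must ensure that the local-height discrepancy forcing a gap in $\hat{h}(P-Q)$ grows at the rate $\alpha$ rather than at the unnormalized rate $\log|D_E|$. This requires precise Tate-curve and Kodaira-symbol calculations at each prime of bad reduction and careful bookkeeping to separate the roles of primes in $S$ (which contribute to the $|S|$ term in the exponent) from primes of bad reduction (which contribute through $\alpha$), so as to avoid double-counting and extract exactly the asserted exponent $|S|+1+(1+r_E)\alpha$.
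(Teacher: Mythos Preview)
The paper does not prove this statement at all: its entire ``proof'' is the single sentence ``The statement is a simple consequence of Theorem 0.7 of \cite{hs}.'' In other words, Theorem~\ref{thmhs} is quoted as a black box from Hindry--Silverman, and the paper's only contribution is to specialize their general result to the base field~$\mathbb{Q}$. Your proposal, by contrast, is an outline of how Hindry and Silverman themselves establish the result in \cite{hs}.

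As such a sketch, your outline has the right architecture---canonical height, Mordell--Weil lattice, a gap/repulsion principle forcing large angles between $S$-integral points in a common dyadic shell, a sphere-packing bound contributing~$C_1^{r_E}$ per shell, and a shell-count bound contributing the remaining factors---and this is indeed the skeleton of the argument in \cite{hs}. However, two of your quantitative assertions are not stated correctly. First, the lower bound you write as $\hat{h}_{\min}\gg (|S|+\alpha)^{-1}$ is not the Hindry--Silverman bound: their key Theorem~0.3 gives $\hat{h}(P)\geq C^{-\alpha}\log|D_E|$ for non-torsion $P$, so the dependence on the Szpiro ratio $\alpha$ is exponential, and the bound also carries the curve height $\log|D_E|$ (which is essential, since $\hat{h}_{\min}$ is certainly not bounded below by a function of $|S|$ and $\alpha$ alone). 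Second, there is no uniform upper bound $\hat{h}(P)\ll |S|+\alpha$ for $S$-integral points; $S$-integral points can have arbitrarily large height. The shell count is instead controlled by comparing the threshold above which the gap principle applies to the canonical-height minimum, and it is the \emph{ratio} of these two quantities---each carrying a factor of the curve height that then cancels---whose logarithm is $O\big(|S|+(1+r_E)\alpha\big)$. If you want to reconstruct the argument rather than cite it, these two points are where the bookkeeping needs to be redone.
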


\begin{proof}
The statement is a simple consequence of Theorem 0.7 of \cite{hs}.
\end{proof}

\begin{proof}[Proof of Theorem \ref{thm4}]
We may assume that $t\geq 3$, otherwise we are done. 
Let $c$ be an element from the vertex class of $a$ and $b$, and $d$ be an element from the other vertex class.
Then with some integer $r$ we have
$$
(ad+1)(bd+1)(cd+1)=r^2.
$$
A straightforward calculation shows that
$$
(x,y)=(9abcd+3(ab+ac+bc),27abcr)
$$
is an integral point on the elliptic curve
\begin{equation}
\label{e}
E:\ \ \ y^2=x^3+Ax+B
\end{equation}
with
$$
A=-27a^2b^2+27a^2bc-27a^2c^2+27ab^2c+27abc^2-27b^2c^2,
$$
$$
B=27(2ab-ac-bc)(ab-2ac+bc)(ab+ac-2bc).
$$
We also obtain that the discriminant of $E$ is given by
$$
D_E=3^{12}a^2 b^2 c^2 (a-b)^2 (a-c)^2 (b-c)^2.
$$
Let $\ell$ be the largest positive integer such that $\ell^4\mid A$ and $\ell^6\mid B$. Then a simple calculation gives that if $p$ is a prime factor of $\ell$, then $p\mid 3ab(a-b)$. Writing $A=\ell^4 A_0$, $B=\ell^6 B_0$, $y=\ell^3 y_0$, $x=\ell^2 x_0$, we get
\begin{equation}
\label{e0}
E_0:\ \ \ y_0^2=x_0^3+A_0x_0+B_0
\end{equation}
as a minimal model of $E$. Further, we see that every integer point of $E$ belongs to a ${\mathbb Z}_S$-point of $E_0$, where $S$ consists of the prime divisors of $\ell$ - and hence $S$ is a subset of the set of prime divisors of $ab(a-b)$. Thus our claim follows from Theorem \ref{thmhs}, assuming the conjectures of Szpiro \eqref{szpiro} and N\'eron \eqref{neron}.
\end{proof}

\section{Proof of the results related to chromatic numbers of Diophantine graphs}

\begin{proof}[Proof of Theorem \ref{thm5}]
The verification of the statement is done by an exhaustive case distinction. 
The vertices are listed in a carefully chosen order, starting by the Diophantine quadruple $\{1,3,8,120\}$. 
As a starting step, we may assume that a four-coloring assigns the colors $0,1,2,3$ to these four vertices, respectively, thereby breaking the symmetry of the colors and reducing the number of possibilities by a factor of $24$. 
At all other vertices, initially the set of all colors $\{0,1,2,3\}$ is registered as possibilities. 
Whenever we assign a definite color to a vertex $u$, the algorithm (implemented in {\tt{Python}}) does a \emph{sweeping} operation: it deletes that color as a possible one from the color sets at all the neighbors of $u$. 
In particular, if such a set shrinks to a unique color, then its neighbors are also swept. 
The program iterates through the list of vertices, always making a case distinction when more than one colors can be assigned to the next vertex based on prior knowledge (following all available sweepings). 
Whenever this happens, the lists are copied as many times as there are possible assignable colors, and the program sweeps these copies of the graph again. 
If a vertex is found that has no possible colors left, the program deletes the corresponding copy of the graph. 
We have chosen $1000$ vertices initially, where there was a good chance for many edges in the induced Diophantine graph. 
Namely, we picked the $1000$ positive integers $a$ where the largest values of the function $2^{S(a)}/\sqrt{a}$ are attained; cf. Lemmas~\ref{lem:squaremod} and \ref{lem:degree} for justification. 
E.g., the maximum of this function is attained by the number $24$. 
The program verified that the $1000$-vertex Diophantine graph is not four-colorable, executing the computation in less than a second on an average PC. 
(All cases ended up being deleted after contradictions in the form of empty color sets at certain vertices were found.) 
There were never more than $200$ cases simultaneously. 

Then to reduce the size, we omitted low-degree vertices from this $1000$-vertex large graph: as discussed in the proof of Theorem \ref{thm:densegr}, as long as the degree of a vertex is less than the edge density, its omission increases the edge density. 
Sometimes this process halted, and we needed to omit somewhat larger degree vertices. 
Once we reduced down to a graph with $119$ vertices, where the program still verified that it cannot be colored by four colors, this greedy approach did not work anymore. 
So we started dropping vertices one-by-one, always checking that the resulting graph is still not four-colorable. 
Finally, we were left with the vertex set $V$ given in the statement.
\end{proof}

\section{Remarks and open problems}\label{sec:remo}

In this section we provide some remarks and open problems related to the topic discussed in the paper.

\vskip.2cm

\noindent{\bf A remark about Hamiltonian paths and cycles.} 
As $a(a+2)+1=(a+1)^2$, we have that $a,a+2\in V_N$ are always linked in $D(V_N)$. 
This simple observation shows that $D(V_N)$ is connected for all $N\geq 8$. 
Indeed, any pair of odd numbers and any pair of even numbers are connected by paths of the form $a, a+2, a+4, \ldots$, and the numbers $1$ and $8$ are also linked. 
Moreover, this observation almost yields a Hamiltonian path in $D(V_N)$ for $N\geq 8$. 
Starting from the largest odd number, we can walk down the odd numbers by jumps of distance two, all the way to the number 1. 
We follow up with the walk $8, 6, 4, 2, 12$, and then we increase the number by two at every turn. 
This path only avoids the number 10.  

Somewhat surprisingly, there is never a Hamiltonian cycle in $D(V_N)$. 
We show this assertion by partitioning the numbers in $V_N$ into $\pmod 4$ residue classes. 
Note that elements of the class of 2 are only linked to numbers divisible by four, and the former class is always at least as big as the latter (and nonempty if $N\geq 2$). 
Thus in a Hamiltonian cycle, both neighbors of a number in the class of $2$ should be divisible by four, and this can only occur if representatives of the two classes alternate in the cycle. 
However, this leaves no room for odd numbers to be included in the cycle. 

So there is always a path that is one short of being a Hamiltonian path, and there is never a Hamiltonian cycle in $D(V_N)$. 
This highlights the problem of the existence of a Hamiltonian path in $D(V_N)$. 
A similar argument as above for cycles shows that there cannot be a Hamiltonian path in $D(V_N)$ if $N$ is congruent to 2 or 3  $\pmod 4$, since in that case there are more elements in the class of 2 than there are numbers in $V_N$ divisible by four. 
In some small examples where $N$ is congruent to 0 or 1  $\pmod 4$, it is easy to verify that there is no Hamiltonian paths: these small examples are $N=17, 32, 33$. 
However, for other small values of $N$ in the residue classes 0 or 1, we have always found Hamiltonian paths in $D(V_N)$, and it seems to be a condition that is more and more probable to hold as $N$ gets larger. 
It is also easy to show that there is a Hamiltonian path in $D(V_N)$ for infinitely many values of $N$: e.g., this is the case for $N=16k^2$ for all $k\in\mathbb{N}$. 
(As witnessed by the path $4k^2-1$, $4k^2-3$, $\ldots, 3, 1, 16k^2-1$, $16k^2-3$, $\ldots, 4k^2+3, 4k^2+1, 16k^2$, $16k^2-2$,  $\ldots, 4, 2$.)

\begin{problem}
Is there a threshold $N_0\in \mathbb{N}$ such that for all $N\geq N_0$, $N\equiv 0,1 \pmod{4}$ there is a Hamiltonian path in $D(V_N)$?
\end{problem}

\vskip.2cm

\noindent{\bf A remark about extensions of Diophantine graphs.} 
As a simple consequence of Theorem \ref{thm1}, we easily obtain that every graph with maximum degree at most two is Diophantine. 
On the other hand, as $K_5$ is not Diophantine by \cite{HTZ19}, there exist non-Diophantine graphs with maximum degree $k$ for any $k\geq 4$. 
So the following question arises naturally.

\begin{problem}
Is it true that every finite graph with maximum degree at most three is Diophantine? 
Or equivalently, is every finite 3-regular graph Diophantine? (Cf. Corollary~\ref{cor:3reg}.)
\end{problem}

In fact, we expect a negative answer. 
The smallest 3-regular graph is $K_4$, which is Diophantine. 
There are no 3-regular graphs on five vertices, so the second smallest examples have six vertices. 
Up to isomorphism, there are exactly two 3-regular graphs on six vertices. 
One of them is the complement of a 6-cycle, which is Diophantine, witnessed by the representation $\{1,3,8,10,96,168\}$. 
The other one is the complete bipartite graph $K_{3,3}$. 
That is the smallest open case of the problem; see also \cite{Dujproblems} as mentioned before. 
Besides the pyramid graph on five vertices, $K_{3,3}$ is in fact the smallest graph $G$ such that it is unknown whether $G$ is Diophantine. 

\begin{problem}
Is the pyramid graph Diophantine?
\end{problem}

Another question related to Theorem \ref{thm1} is the following.

\begin{problem}
Is it true that every Diophantine graph can be represented as $D(V)$, with some $V$ consisting of integers having pairwise different square-free part?
\end{problem}

\vskip.2cm

\noindent{\bf A remark about upper bounds for the number of edges of Diophantine graphs.} 
As it was pointed out earlier, bipartite graphs are crucial in this respect. 
So we propose the following question (for which we strongly expect a negative answer).

\begin{problem}
Is $K_{t,t}$ a subgraph of a Diophantine graph for all $t\in \mathbb{N}$?
\end{problem}

Once again, $K_{3,3}$ is the smallest open case of this problem. 
Considering the fact that even for a graph as simple as $K_{3,3}$, the question whether it is Diophantine seems hard to answer (and the question for $K_{t,t}$ is related to deep conjectures), we propose a problem concerning the complexity of the language of Diophantine graphs. 

\begin{problem}
Is it decidable whether an input finite graph is Diophantine? 
\end{problem}

It is well-known that the solvability of Diophantine equations is undecidable, see \cite{DPR61,Mat70}. 

\vskip.2cm

\noindent{\bf Remarks about the chromatic number of Diophantine graphs.} 
By a classical result of Bollob\'as \cite{Bol} we know that the chromatic number of a $p$-random graph on $n$ vertices is asymptotic to $n/\log_q n$ with $q=1/(1-p)$, so it goes to infinity with $n$. 
Based on this theorem, our first idea was that the chromatic number of Diophantine graphs should not be bounded: it seems plausible that  $D(V_N)$ should contain large quasirandom regular subgraphs with any fixed degree if $N$ is large enough. 
However, somewhat surprisingly, if we modify the definition of the graph relation by linking $a$ and $b$ if and only if $ab+2$ is a perfect square, then the graph obtained on the vertex set $\mathbb{N}$ is 3-colorable. 
Indeed, we can simply color even numbers red, those of the form $4k+1$ blue, and those of the form $4k+3$ green. 
As the $\pmod{4}$ residue of a square is 0 or 1, this is a valid coloring. 
Analyzing the phenomenon, one finds that such a construction using residue classes as colors is only possible if the number added to $ab$ in the definition is congruent to $2$ modulo $4$, as every other number is the difference of two squares. 
Still, we consider this observation as a warning. 
Going back to the standard definition, where $a$ and $b$ are linked by an edge if and only if $ab+1$ is a perfect square, it is hard to find an apparent reason why the chromatic number of any Diophantine graph could not be at most four. 
(The 80-vertex counterexample was found by an extensive computer-assisted calculation.) 
As there are Diophantine quadruples such as $\{1,3,8,120\}$, four colors are needed in general. 
The smallest graph that is not 4-colorable is $K_5$, which is not Diophantine. 
Moreover, the smallest $K_5$-free graph that is not 4-colorable has seven vertices: it consists of a five-cycle and two additional vertices that are linked to every other vertex. 
We do not know whether this graph $G$ is Diophantine. 
However, provided that the Regular Diophantine Quadruple Conjecture holds, it is not. 
To formulate the conjecture, we need to introduce some notation. 
Let $\{a,b,c\}$ be a Diophantine triple, write
$$
ab+1=r^2,\ \ \ ac+1=s^2,\ \ \ bc+1=t^2
$$
and set
$$
d_\pm:=a+b+c+2abc\pm 2rst.
$$
Then Diophantine quadruples of the form $\{a,b,c,d_\pm\}$ are called regular. 
The Regular Diophantine Quadruple Conjecture says that every Diophantine quadruple is regular. 
Note that it easily implies that any Diophantine triple $\{a,b,c\}$ can be extended to a Diophantine quadruple in at most two ways. 
(Assuming $0<a<b<c$, we have $0\leq d_-<c<d_+$.) 
For more details about the Regular Diophantine Quadruple Conjecture, see e.g. the homepage \cite{Dujhomepage} and the references given there. 
Now we show how this conjecture implies that the above $G$ with seven vertices cannot be a Diophantine graph. 
Let $u,v$ denote the two vertices that are linked to all the other vertices, and let $a,b,c,d,e$ denote the other five vertices along the five-cycle so that $a$ is the smallest out of the five and $(a,b,c,d,e)$ is a path. 
Then the Diophantine triple $(u,v,b)$ is extended to Diophantine quadruples in two different ways, namely by $a$ and by $c$. 
Assuming that the conjecture holds, these extensions are the two regular extensions. 
As $a<b$, $a$ cannot be the larger out of the two regular extensions of $(u,v,b)$, thus it is $c$. 
In particular, $c$ is the largest in the quadruple $(u,v,b,c)$, and consequently, $b<c$.  
We can repeat the same argument, obtaining that $d$ is the largest in the quadruple $(u,v,c,d)$, $e$ is the largest in the quadruple $(u,v,d,e)$, and finally, $a$ is the largest in the quadruple $(u,v,e,a)$. 
This is a contradiction, since $a$ is smallest in the five-cycle.

Using an approach which is similar to that used in the proof of Theorem \ref{thm5}, we tried to find a Diophantine graph that is not five-colorable. 
The complexity of this problem is orders of magnitude larger than that of the four-colorable variant. 
We started with the $1000$ numbers as earlier, omitted the low-degree vertices, and extended them by numbers below a million that have many neighbors among them. 
A much denser graph can be obtained in this way (with $1000$ vertices and average degree nearly $20$). 
It is almost certain that the graph is not five-colorable: we have done the same case distinction as in the proof of Theorem \ref{thm5}, as long as the memory of the computer allowed it, and then tried random extensions rather than doing a full case distinction. 
The program usually found a contradiction after coloring the first $60$ vertices, and it always halted before reaching the $100$-th vertex on the list. 
However, a complete verification seems to be beyond the computational horizon. 
Based on the above random samplings, we estimate that the complete runtime of the program would be around $10^{13}$ secs. 
Even though the task can be distributed almost perfectly among different computers, it would take about three years to execute it on $100000$ cores. 
Perhaps this runtime can be pushed down below the computational horizon by a more carefully chosen vertex set.

As the edge density of a Diophantine graph can be arbitrarily large (and in particular, the same applies to the minimum degree), there is no obvious upper bound for the chromatic number of these graphs, either. 
So we suggest the following question.

\begin{problem}
Is there a Diophantine graph that is not five-colorable? More generally, are there Diophantine graphs with arbitrarily large chromatic number?
\end{problem}

\section{Acknowledgments}
The authors are grateful to M\'arton Szikszai for the fruitful discussions. 
This research was supported in part by the HUN-REN Hungarian Research Network and the NKFIH grants 128088, 130909 and KKP~138270.


\begin{thebibliography}{10}

\bibitem{Baker68}
{\sc Baker, A.}
\newblock The {D}iophantine equation $y^2=ax^3+bx^2+cx+d$.
\newblock {\em J. Lond. Math. Soc. 43\/} (1968), 1--9.

\bibitem{Baker69}
{\sc Baker, A.}
\newblock Bounds for the solutions of the hyperelliptic equation.
\newblock {\em Math. Proc. Cambridge Philos. Soc. 65\/} (1969), 439--444.

\bibitem{BaDe}
{\sc Baker, A., and Davenport, H.}
\newblock The equations $3x^2-2 = y^2$ and $8x^2-7 = z^2$.
\newblock {\em Q. J. Math. 20}, 1 (1969), 129--137.

\bibitem{Bol}
{\sc Bollob\'as, B.}
\newblock The chromatic number of random graphs.
\newblock {\em Combinatorica 8}, 1 (1988), 49--55.

\bibitem{magma}
{\sc Bosma, W., Cannon, J., and Playoust, C.}
\newblock {The Magma algebra system. I. The user language}.
\newblock {\em J. Symbolic Comput. 24\/} (1997), 235--264.

\bibitem{BGy04}
{\sc Bugeaud, Y., and Gyarmati, K.}
\newblock {On generalizations of a problem of {D}iophantus}.
\newblock {\em Illinois J. Math. 48}, 4 (2004), 1105--1115.

\bibitem{DPR61}
{\sc Davis, M., Putnam, H., and Robinson, J.}
\newblock The decision problem for exponential {D}iophantine equations.
\newblock {\em Ann. of Math. 74}, 3 (1961), 425--436.

\bibitem{Dujhomepage}
{\sc Dujella, A.}
\newblock Diophantine $m$-tuples.
\newblock https://web.math.pmf.unizg.hr/\textasciitilde duje/dtuples.html.

\bibitem{Dujproblems}
{\sc Dujella, A.}
\newblock Open problems on {D}iophantine $m$-tuples and elliptic curves.
\newblock https://web.math.pmf.unizg.hr/\textasciitilde duje/pdf/open2.pdf.

\bibitem{Duj04}
{\sc Dujella, A.}
\newblock There are only finitely many {D}iophantine quintuples.
\newblock {\em J. Reine Angew. Math. 2004}, 566 (2004), 183--214.

\bibitem{Duj08}
{\sc Dujella, A.}
\newblock On the number of {D}iophantine $m$-tuples.
\newblock {\em Ramanujan J. 15\/} (2008), 37--46.

\bibitem{Dujbook}
{\sc Dujella, A.}
\newblock {\em Diophantine $m$-tuples and Elliptic Curves}.
\newblock Springer Cham, 2024.

\bibitem{DP}
{\sc Dujella, A., and Peth\H{o}, A.}
\newblock A generalization of a theorem of {B}aker and {D}avenport.
\newblock {\em Q. J. Math. 49}, 3 (1998), 291--306.

\bibitem{GPZ94}
{\sc Gebel, J., Pethő, A., and Zimmer, H.}
\newblock Computing integral points on elliptic curves.
\newblock {\em Acta Arith. 68\/} (1994), 171--192.

\bibitem{HaSa}
{\sc Hajdu, L., and S\'ark\"ozy, A.}
\newblock On multiplicative decompositions of polynomial sequences, {I}.
\newblock {\em Acta Arith. 184}, 2 (2018), 139--150.

\bibitem{HR17}
{\sc Hardy, G.~H., and Ramanujan, S.}
\newblock The normal number of prime factors of a number $n$.
\newblock {\em Q. J. Math. 48\/} (1917), 76--92.

\bibitem{HTZ19}
{\sc He, B., Togb\'e, A., and Ziegler, V.}
\newblock There is no {D}iophantine quintuple.
\newblock {\em Trans. Amer. Math. Soc. 371\/} (2019), 6665--6709.

\bibitem{HT88}
{\sc Hildebrand, A., and Tenenbaum, G.}
\newblock On the number of prime factors of an integer.
\newblock {\em Duke Math. J. 56}, 3 (1988), 471--501.

\bibitem{hs}
{\sc Hindry, M., and Silverman, J.~H.}
\newblock The canonical height and integral points on elliptic curves.
\newblock {\em Invent. Math. 93\/} (1988), 419--450.

\bibitem{KST54}
{\sc K\H{o}v\'ari, T., S\'os, V.~T., and Tur\'an, P.}
\newblock On a problem of {K}. {Z}arankiewicz.
\newblock {\em Colloq. Math. 3}, 1 (1954), 50--57.

\bibitem{Mat70}
{\sc Matiyasevich, Y.}
\newblock Enumerable sets are {D}iophantine.
\newblock {\em Dokl. Akad. Nauk. 191\/} (1970), 279--282.
\newblock in Russian.

\bibitem{ne}
{\sc N\'eron, A.}
\newblock Propri\'et\'es arithm\'etiques de certaines familles de courbes
  alg\'ebriques.
\newblock In {\em Proc. Inter. Congress of Math. '54\/} (1956), North-Holland
  Publishing Co., Amsterdam, pp.~481--488.
\newblock vol. III.

\bibitem{PPVW}
{\sc Park, J., Poonen, B., Voight, J., and Wood, M.~M.}
\newblock A heuristic for boundedness of ranks of elliptic curves.
\newblock {\em J. Eur. Math. Soc. 21}, 9 (2019), 2859--2903.

\bibitem{Sat53}
{\sc Sathe, L.~G.}
\newblock On a problem of {H}ardy and {R}amanujan on the distribution of
  integers having a given number of prime factors.
\newblock {\em J. Indian Math. Soc. 17}, 2 (1953), 63--82.

\bibitem{Sel54}
{\sc Selberg, A.}
\newblock Note on a paper by {L}. {G}. {S}athe.
\newblock {\em J. Indian Math. Soc. 18}, 1 (1954), 83--87.

\bibitem{ST94}
{\sc Stroeker, R., and Tzanakis, N.}
\newblock Solving elliptic {D}iophantine equations by estimating linear forms
  in elliptic logarithms.
\newblock {\em Acta Arith. 67\/} (1994), 177--196.

\bibitem{Sz}
{\sc Szpiro, L.}
\newblock Propri\'et\'es num\'eriques du faisceau dualisant relatif.
\newblock {\em Ast\'erisque 86\/} (1981), 44--78.
\newblock Seminaire sur les pinceaux des courbes de genre au moins deux,
  expos\'e 3.

\bibitem{Thu09}
{\sc Thue, A.}
\newblock {\"Uber Ann\"aherungswerte algebraischer Zahlen}.
\newblock {\em J. Reine Angew. Math. 1909}, 135 (1909), 284--305.

\bibitem{Tu41}
{\sc Tur\'an, P.}
\newblock Eine {E}xtremalaufgabe aus der {G}raphentheorie.
\newblock {\em Matematikai \'es Fizikai Lapok 48\/} (1941), 436--452.
\newblock in Hungarian, with a summary in German.

\bibitem{vin}
{\sc Vinogradov, I.~M.}
\newblock {\em Elements of Number Theory}.
\newblock Dover, New York, 1954.

\bibitem{yip}
{\sc Yip, C.~H.}
\newblock Multiplicatively reducible subsets of shifted perfect $k$-th powers
  and bipartite {D}iophantine tuples.
\newblock arXiv:2312.14450 [math.NT], 2023.

\end{thebibliography}
\end{document}